\definecolor{refkey}{rgb}{1, 0.5, 0}  
\definecolor{labelkey}{rgb}{1,1,1}
\definecolor{labelkey}{rgb}{0.1,1,0.2}
\newtheorem{theorem}{Theorem}[section]
\newtheorem{lemma}{Lemma}[section]
\newtheorem{remark}{Remark}[section]
\newtheorem{definition}{Definition}[section]
\def\bel{\begin{equation}\label}
\def\eeq{\end{equation}}
\def\bega{\begin{array}}
\def\enda{\end{array}}
\def\ve{\varepsilon}
\title{Traveling Waves for a Microscopic Model of Traffic Flow}
\author{Wen Shen and Karim Shikh-Khalil \\ Mathematics Department, Penn State University, U.S.A.\\
wxs27{@}psu.edu, krs5562{@}psu.edu}
\begin{document}

%\pagewiselinenumbers\linenumbers
%\modulolinenumbers[1]

\maketitle

\begin{abstract}
We consider the follow-the-leader model for traffic flow.
The position of each car $z_i(t)$ satisfies an ordinary differential equation, 
whose speed depends only on the relative position  $z_{i+1}(t)$ of the car ahead. 
Each car perceives a local density $\rho_i(t)$. 
We study a discrete traveling wave profile $W(x)$ along which the trajectory 
$(\rho_i(t),z_i(t))$ traces such that $W(z_i(t))=\rho_i(t)$ for all $i$ and $t>0$;
see definition~\ref{def:2}. 
We derive a delay differential equation satisfied by such profiles.
Existence and uniqueness of solutions are proved, 
for the two-point boundary value problem where the car densities
at $x\to\pm\infty$ are given. 
Furthermore, we show that such profiles are locally stable, 
attracting nearby monotone  solutions of the follow-the-leader model. 
\end{abstract}

MSC2010:  35L02, 35L65, 34B99, 35Q99

%%%%%%%%%%%%%%%%%%%%%%%%
\section{Introduction and Preliminaries}
\setcounter{equation}{0}

We consider a microscopic model for traffic flow.
Let $\ell$ be the length of all the cars, and let 
$z_i (t)$ be the position of $i$th car at time $t$.  
We order the indices for the cars such that 
\begin{equation}\label{zi}
z_i(t) \le z_{i+1}(t)-\ell\qquad \mbox{for every}~i\in\mathbb{Z}.
\end{equation}
For a car with index $i$, we define the local density
\begin{equation}\label{defrho}
\rho_i(t)  \; \dot= \; \frac{\ell}{z_{i+1}(t)-z_i(t)}. 
\end{equation}
Note that if $\rho_i= 1$, then the two cars with indices $i$ and $i+1$ will be
bumper-to-bumper.  Thus $0 \le \rho_i \le 1$ for all $i$. 

We assume that the speed of the car with index $i$ depends solely on 
the local density $\rho_i$, i.e., 
\begin{equation}\label{FtL}
 \dot z_i(t)  = V \cdot \phi(\rho_i(t)) 
 = V \cdot \phi\left(\frac{\ell}{z_{i+1}(t)-z_i(t)}\right).
\end{equation}
Here $V$  is the speed limit, 
and the function
$\phi(\rho)$, defined on $\rho\in[0,1]$,  satisfies
\begin{equation}\label{phiP} 
\phi(1)=0, \qquad \phi(0)=1, \qquad  \phi'(\rho)\le - \hat c_0<0\quad \mbox{for all}~\rho\in[0,1].
\end{equation}
We remark that a popular choice for $\phi(\cdot)$  is the 
Lighthill-Whitham model~\cite{MR0072606}:
\begin{equation}\label{phi}
\phi(\rho)= 1-\rho.
\end{equation}
Given an initial distribution of car positions $\{z_i(0)\}$, 
the system~\eqref{FtL} depicts the ``follow-the-leader'' 
behavior of each car. We refer to this model as the FtL model. 

Note that~\eqref{FtL} can be rewritten as a system of ODEs for the
discrete density functions $\rho_i(t)$,
\begin{equation}
\label{rhodot}
    \dot \rho_i(t) 
=-\frac{\ell \left( \dot z_{i+1} - \dot z_i \right) }{(z_{i+1}-z_i)^2} 
= \frac{V}{\ell} \rho_i^2 \cdot \big( \phi(\rho_{i}) - \phi(\rho_{i+1})\big).
\end{equation}
If one uses~\eqref{phi}, then \eqref{rhodot} becomes
\begin{equation}\label{rhodotx}
    \dot \rho_i(t) 
= \frac{V}{\ell} \rho_i^2 \left( \rho_{i+1} - \rho_{i}\right).
\end{equation}

Given the initial positions of the cars $z_i(0)$ and the speed of the leader
as $i\to\infty$,
the existence of solution for the ODE system~\eqref{FtL}
is established in the literature~\cite{MR3356989, HoldenRisebro}. 
We can define a piecewise constant function $\rho^{\ell}(t,x)$
from the discrete densities $\{\rho_i\}$ as
\begin{equation}\label{disrho}
\rho^{\ell}(t,x)\,\dot=\,\rho_i(t), \quad  \mbox{for}~x\in[z_i(t), z_{i+1}(t)).
\end{equation}
As $\ell\to 0$ and the number of the cars tends to $\infty$,
under suitable assumptions one has 
the convergence $\rho^\ell(t,x) \to \rho(t,x)$, 
where the limit function $\rho(t,x)$ provides a weak solution for the scalar
conservation law
\begin{equation}\label{PDE}
\rho_t + f(\rho)_x =0, \qquad \mbox{where}\quad 
f(\rho) = V \rho \cdot \phi(\rho) .
\end{equation}
See~\cite{MR3356989} for a proof using direct properties of the
solutions of~\eqref{FtL},
and some more recent works~\cite{HoldenRisebro,HoldenRisebro2} where
the same results are achieved
utilizing  a Lagrangian formulation and the properties of monotone 
numerical schemes. 
For other related works including model derivations, 
analysis, and treatment of various 
conditions, we refer to~\cite{MR2727134,MR1952890, MR1952895,MR3253235,MR2006201,
MR3217759,MR3541527,MR3605557,MR3177735} and the references therein.

It is well-known that, in the solutions of the nonlinear conservation law~\eqref{PDE}, 
discontinuities can form in finite time even with smooth initial data.
Such discontinuities are known as shocks. 
In the literature for traffic flow, the flux $f(\rho)$ is typically concave  such that
\begin{equation}\label{fP}
f(0)=f(1)=0,\qquad f''(\rho) \le -c_0 < 0, \qquad  f'(\rho^*)=0 
\qquad \mbox{for a unique}~\rho^*\in(0,1).
\end{equation}
Note that~\eqref{fP} 
holds with~\eqref{phi}, where 
$f(\rho)=V \rho (1-\rho)$.
In general, 
\eqref{fP}  leads to additional assumptions on $\phi(\rho)$, besides~\eqref{phiP}, i.e.,
\begin{equation}\label{phi2}
\phi''(\rho) \le  -\frac{1}{\rho}\; \left[ 2 \phi'(\rho)+ c_0/V\right],
\qquad \rho\in[0,1].  
\end{equation}

Since $f''<0$, only upward jumps are admissible in the solutions of~\eqref{PDE}.
The solution for the Riemann problem with initial data 
\[
\rho(0,x) = \begin{cases}
\rho_- , & x<0, \\ \rho_+, & x>0,
\end{cases} \qquad \mbox{and}\quad \rho_- < \rho_+
\]
results in a single shock which travels 
with the Rankine-Hugoniot jump speed:
\[ 
\sigma=\frac{f(\rho_-)-f(\rho_+)}{\rho_- - \rho_+}.
\]
The shock is stationary when $f(\rho_-)=f(\rho_+)$.

In this work we seek a ``discrete traveling wave profile'' for the FtL model, 
as a corresponding approximation to the shock waves for the conservation 
law~\eqref{PDE}. 
To fix the idea, 
we start with a monotone stationary profile $W(x)$ such that the position of the 
point $(z_i(t), \rho_i(t))$ traces along the graph of the function $W(x)$ 
as time $t$ evolves. To be precise, we require
\begin{equation}\label{W1}
W(z_i(t)) = \rho_i(t), \qquad \forall t\ge 0, \quad \forall i.
\end{equation}
We remark that for general traveling waves with speed $\sigma\not=0$, 
the profile will be stationary in the shifted coordinate $x \mapsto \xi=x-\sigma t$,
see the discussion in Section~5. 

Differentiating both sides of~\eqref{W1} in $t$, 
and using~\eqref{FtL} and~\eqref{rhodot},
one gets
\[ 
W'(z_i) = \frac{\dot \rho_i}{\dot z_i} = 
\frac{\rho_i^2}{\ell \cdot \phi(\rho_i)} \Big[\phi(\rho_i)-\phi(\rho_{i+1})\Big]
=
\frac{W^2(z_i)}{\ell \cdot \phi(W(z_i))} \Big[\phi(W(z_i))-\phi(W(z_{i+1}))\Big].
\]
Note that 
\[
 z_{i+1}= z_i + \frac{\ell}{\rho_i} = z_i + \frac{\ell}{W(z_i)}.
 \]

Since $z_i$ is randomly chosen, we write $x$ for $z_i$, 
and obtain the following equation:
\begin{equation}\label{eq:W}
W'(x) = \frac{W^2(x)}{\ell \cdot \phi(W(x))} \cdot
\left[ \phi(W(x)) -\phi(W(x+\frac{\ell}{W(x)})) \right].
\end{equation}
If $W(x)=0$ for some $x$, then we set 
\[ W(x+\frac{\ell}{W(x)}) = W(+\infty) .\]
Equation~\eqref{eq:W} is a Delay Differential Equation (DDE). 
Furthermore, \eqref{eq:W} is autonomous since  
the righthand side does not depend on $x$ explicitly. 

Once the ``initial data'' is given on an interval $[\hat x,\infty)$ 
for any $\hat x$,  the DDE~\eqref{eq:W} can be solved backwards in $x$,
and the profile $W(x)$ can be obtained for all $x\le \hat x$.
This is in agreement with the following-the-leader principle. 

In this paper we study in detail the DDE~\eqref{eq:W}.
In particular, we study the ``two-point-boundary-value" problem. 
To be specific, 
we seek solutions of~\eqref{eq:W} that satisfies the boundary conditions at the infinities:
\begin{equation}\label{BC}
\lim_{x\to\pm\infty} W(x)=\rho_\pm, \qquad  0 \le \rho_- \le  \rho_+\le 1.
\end{equation}
In the case of the stationary profile $W(x)$,  $\rho_\pm$ must further satisfy
\begin{equation}\label{BCcon}
 f(\rho_-)=f(\rho_+) \;\dot= \;\bar f, \qquad 0\le \rho_- \le \rho^*\le \rho_+\le 1
 \quad \mbox{where}\quad
 f'(\rho^*)=0.
\end{equation}

Note that any horizontal shift of the profile $W(x)$ is again a profile. 
Thus, a unique profile can be achieved by requiring a ``location-fixing'' condition
at $x=0$, say 
\[
W(0)=\rho^*.
\]
We show that, for any given $\rho_\pm$ satisfying~\eqref{BCcon},
there exists a profile $W(x)$, unique up to horizontal shifts. 
Furthermore, such traveling waves are local attractors
for the solutions of the FtL model~\eqref{FtL}. 

In the literature, solutions for the conservation law~\eqref{PDE} 
are approximated by various approaches.
These include 
the viscous equations, kinetic models with relaxation terms,
and various numerical approximations. 
For many of the approximate solutions, the study of traveling wave profiles
is one of the key techniques in the analysis. 
In this paper, we consider the microscopic ``particle" model and its traveling waves, 
filling a missing piece in the literature.

We mention also a study on traveling waves for a non-standard 
 integro-differential equation
modeling  slow erosion~\cite{MR3115842}, where uniqueness and local stability
are achieved. 

The rest of the paper is organized as follows. 
For stationary profiles $W(x)$, 
in Section 2 we prove several technical Lemmas.
These results are utilized in Section 3 where we prove the existence 
and uniqueness of the profile. 
Furthermore, such profiles are local attractors 
for solutions of the FtL model~\eqref{FtL}, proved in Section 4. 
Extension to general traveling waves with non-zeros speed
is outlined in Section 5. 
Finally, concluding remarks and further open problems are discussed 
in Section 6.

%%%%%%%%%%%%%%%%%%%%%%
\section{Technical Lemmas; Properties of the Stationary Profile}
\setcounter{equation}{0}

We consider the stationary profile $W(x)$,
satisfying the DDE~\eqref{eq:W} with boundary 
conditions~\eqref{BC}-\eqref{BCcon}.

We first provide a formal argument which makes connection 
between the profile $W(x)$ and the viscous shock for the conservation law~\eqref{PDE}.
Assuming that 
$\ell/W(x)>0$ is very small,
by Taylor expansion we have
\begin{equation}\label{Taylor}
 \phi\left(W(x+\frac{\ell}{W(x)})\right)-\phi(W(x)) 
= \frac{\ell}{W} (\phi(W))_x + \frac12 \left(\frac{\ell}{W}\right)^2 
(\phi(W))_{xx} + \mathcal{O}\left(\left(\frac{\ell}{W}\right)^3\right).
\end{equation}
Dropping the higher order terms, 
the DDE~\eqref{eq:W} is  approximated by 
\[
W_x = \frac{W}{\phi(W)} \cdot \left[  - (\phi(W))_x - \frac{\ell}{2W} (\phi(W))_{xx}
\right]
\]
This equation can be manipulated into:
\[
\phi(W) W_x + W (\phi(W))_x=-\frac{\ell}{2} (\phi(W))_{xx},
\]
and then
\[
(W \cdot\phi(W))_x=  \frac{1}{V}f(W)_x=-\frac{\ell}{2} (\phi(W))_{xx}.
\]
We conclude
\begin{equation}\label{a2}
f(W)_x = \left( - \frac{V\ell}{2} \phi'(W) W_x \right) _x 
\end{equation}

Now we consider the  viscous conservation law
\[ \rho_t + (f(\rho))_x = \ve \rho_{xx}.\]
Stationary viscous shock waves $\bar \rho(x)$ must satisfy the ODE
\begin{equation}\label{a3}
 f(\bar\rho)_x = \ve \bar\rho_{xx} = \left(\ve \bar\rho_{x} \right)_x.
 \end{equation}
 
We observe that the ODEs \eqref{a2} and \eqref{a3} are connected through the relation:
\[
\ve \approx -\frac{V\ell}{2} \phi'(W(x)) .
\]
For the case  $\phi(\rho)=1-\rho$ where $\phi'(\rho)=-1$, we have the connection
$ \ve \approx \frac{V\ell}{2}$.

\bigskip

For any given profile $W(x)$, one can generate a distribution of car positions $\{z_i\}$,
and vise versa.
We make the following definitions. 

\begin{definition}\label{def:1}
Let the function $x\mapsto W\in(0,1]$ 
be given for $x\in\mathbb{R}$. 
We call a sequence of car positions  $\{z_i\}$ 
\textbf{a distribution generated by $W(x)$}, if
\begin{equation}\label{eq:def1}
z_{i+1}-z_i = \frac{\ell}{W(z_i)}, \qquad \forall i\in\mathbb{Z}.
\end{equation}
If one imposes $z_0=0$, then the distribution  is unique.
\end{definition}

\begin{definition}\label{def:2}
Given a profile $W(x)$ and a distribution of car positions $\{z_i(t)\}$.
Let  $\{\rho_i(t)\}$ be the corresponding discrete densities for the cars, 
computed as~\eqref{defrho}.
We say that $\{z_i(t)\}$ \textbf{traces along $W(x)$},  if 
\[
 W(z_i(t)) = \rho_i(t)=\frac{\ell}{z_{i+1}(t)-z_i(t)}, \qquad \forall i, t.
 \]
\end{definition}

The following Lemma is an immediate consequence of these definitions.

\begin{lemma}\label{lm:0}
Let $W(x)$ be a given profile and $\{z_i(0)\}$ be a distribution generated by $W(x)$.  
Let $\{z_i(t)\}$ be the solution of~\eqref{FtL} with initial data $\{z_i(0)\}$.
Then, $W(x)$ satisfies the DDE~\eqref{eq:W} 
if and only if $\{z_i(t)\}$ traces along $W(x)$.
\end{lemma}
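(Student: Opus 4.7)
The lemma is an equivalence, so I would treat the two directions separately.

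For the reverse implication ($\Leftarrow$), I would start from the tracing identity $W(z_i(t))=\rho_i(t)$, differentiate in $t$, and substitute \eqref{FtL} and \eqref{rhodot}. This is precisely the chain of manipulations leading to \eqref{eq:W} in the paragraph above, and it yields the DDE at the single point $x=z_i(t)$. To pass from this to the DDE at every $x\in\mathbb{R}$, I would note that since $\phi>0$ on $[0,1)$ each trajectory $t\mapsto z_i(t)$ is strictly increasing and sweeps out $[z_i(0),\infty)$; combined with $z_i(0)\to-\infty$ as $i\to-\infty$ (a consequence of the generating relation \eqref{eq:def1}), the union of these trajectories exhausts $\mathbb{R}$.

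For the forward implication ($\Rightarrow$), the plan is to introduce the \emph{decoupled} auxiliary system
\begin{equation*}
\dot{\tilde z}_i(t)= V\phi\bigl(W(\tilde z_i(t))\bigr),\qquad \tilde z_i(0)=z_i(0),
\end{equation*}
in which each $\tilde z_i$ is a scalar ODE that ignores its neighbors, and then to show that $\{\tilde z_i\}$ coincides with the FtL solution $\{z_i\}$. The crux is showing that the generating relation $\tilde z_{i+1}(t)=\tilde z_i(t)+\ell/W(\tilde z_i(t))$, valid at $t=0$ by Definition~\ref{def:1}, propagates for all $t\ge 0$. I would verify this by checking that both sides solve the same scalar ODE $\dot y=V\phi(W(y))$: the left-hand side by construction, and the right-hand side via a direct calculation in which \eqref{eq:W} at $x=\tilde z_i(t)$ is used to evaluate $\tfrac{d}{dt}[\ell/W(\tilde z_i)]$, producing the collapse
\begin{equation*}
\frac{d}{dt}\Bigl[\tilde z_i+\tfrac{\ell}{W(\tilde z_i)}\Bigr]=V\phi\Bigl(W\bigl(\tilde z_i+\tfrac{\ell}{W(\tilde z_i)}\bigr)\Bigr).
\end{equation*}
ODE uniqueness then forces the generating relation to persist. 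Substituting $W(\tilde z_i)=\ell/(\tilde z_{i+1}-\tilde z_i)$ back into the auxiliary system turns it into the FtL equation, so $\{\tilde z_i\}$ solves \eqref{FtL} with initial data $\{z_i(0)\}$; uniqueness for the FtL system then yields $\tilde z_i\equiv z_i$, and the tracing property follows.

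The main obstacle is conceptual: seeing that \eqref{eq:W} is exactly the statement that the map $x\mapsto x+\ell/W(x)$ sends trajectories of the scalar flow $\dot x=V\phi(W(x))$ to other trajectories of the same flow. Once this observation is in hand, the technical hurdles are mild --- \eqref{eq:W} writes $W'$ as a continuous function of $W$ and its delayed value, so $W$ is $C^1$, and with $W>0$ the composition $\phi\circ W$ is Lipschitz on compact subsets, which is enough to invoke Picard--Lindel\"of in the uniqueness step.
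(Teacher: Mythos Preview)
The paper does not give a proof of this lemma at all: it is simply declared ``an immediate consequence of these definitions'' and the text moves on to Theorem~\ref{IVP}. Your argument is correct and considerably more careful than anything in the paper. The reverse implication is precisely the differentiation the authors carry out informally when deriving \eqref{eq:W}, and your remark about the trajectories $\{z_i(t):t\ge 0,\ i\in\mathbb{Z}\}$ exhausting $\mathbb{R}$ closes the gap between ``the DDE holds at each $z_i(t)$'' and ``the DDE holds at every $x$''. The forward implication --- introducing the decoupled flow $\dot{\tilde z}_i=V\phi(W(\tilde z_i))$ and recognizing that \eqref{eq:W} is exactly the statement that the shift $x\mapsto x+\ell/W(x)$ sends trajectories of this scalar flow to trajectories --- is an elegant way to make the lemma rigorous; the paper offers no counterpart to this step, and your approach has the merit of explaining \emph{why} the equivalence holds rather than asserting it.
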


Our first theorem states existence and uniqueness of monotone solutions of~\eqref{eq:W} 
as an initial value problem, under suitable assumptions on the initial data. 

\begin{theorem}\label{IVP}
Fix an $\hat x$. 
Let $\psi(x) \in (0,1)$  be a continuous monotone 
function defined on the interval $x\ge \hat x$
such that $\psi'(x) >0$ for all $x\ge \hat x$.
Let $W(x)$ be the solution of the DDE~\eqref{eq:W} on $x< \hat x$,
solved backwards in $x$,  with initial data $\psi(x)$ given on $x\ge\hat x$. 
Then, there exists a unique positive solution $W(x)$, which is 
monotone increasing such that
\begin{equation}\label{2.6}
W'(x) >0, \qquad W(x) >0, \qquad \mbox{for}~ x \le \hat x.
\end{equation}
\end{theorem}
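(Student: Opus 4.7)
The plan is to use the classical method of steps for delay equations, solving backwards from $\hat x$ in successive pieces. Set $x_0 \dot= \hat x$ and $x_k \dot= \hat x - k\ell$ for $k \ge 1$. On $[x_0, \infty)$ the profile is prescribed, $W \equiv \psi$. The key point is that for any $x \ge x_k$ and any $W \in (0, 1]$,
$$x + \frac{\ell}{W} \;\ge\; x + \ell \;\ge\; x_{k-1},$$
so once $W$ is already known on $[x_{k-1}, \infty)$ the delay term in~\eqref{eq:W} can be evaluated there, and~\eqref{eq:W} reduces to the ordinary differential equation
$$W'(x) \;=\; G_k(x, W(x)), \qquad G_k(x, W) \;\dot=\; \frac{W^2}{\ell\,\phi(W)}\Big[\phi(W) - \phi\big( W(x + \ell/W) \big)\Big],$$
on $[x_k, x_{k-1}]$, to be solved backwards from the terminal value inherited from the previous step. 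Iterating produces $W$ on all of $(-\infty, \hat x]$.

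For each step I would verify that $G_k$ is continuous in $(x,W)$ and locally Lipschitz in $W$ on any compact subset of $[x_k, x_{k-1}] \times (0,1)$: $\phi$ is $C^2$ with $\phi > 0$ away from $W = 1$, the map $W \mapsto x + \ell/W$ is smooth for $W > 0$, and the history function is $C^1$ (for $k = 1$ this is hypothesized of $\psi$; for $k \ge 2$ it follows from $W' = G_{k-1}$ being continuous). Picard--Lindel\"of then delivers a unique backward solution of $W' = G_k(x, W)$ on $[x_k, x_{k-1}]$ starting from $W(x_{k-1})$, as long as $W$ stays strictly inside $(0,1)$.

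The needed qualitative bounds follow by induction on $k$. Assume $W$ is positive, strictly increasing on $(x_{k-1}, \hat x]$, and satisfies $W < \psi(\hat x)$. Then for $x \in [x_k, x_{k-1}]$ the shifted argument $x + \ell/W(x)$ lies in $(x_{k-1}, \infty)$, where by hypothesis $W(x + \ell/W(x)) > W(x)$; since $\phi$ is strictly decreasing by~\eqref{phiP}, the bracket in $G_k$ is strictly positive, hence $W'(x) > 0$. Backward monotonicity then preserves $W < \psi(\hat x) < 1$, which in turn keeps $\phi(W) \ge \phi(\psi(\hat x)) > 0$. For positivity and completeness of the backward flow, the crude estimate
$$0 \;<\; W'(x) \;\le\; \frac{W(x)^2}{\ell\,\phi(\psi(\hat x))} \;=\; C\, W(x)^2$$
converts, via $d(1/W)/dx = -W'/W^2 \ge -C$ integrated from $x$ up to $\hat x$, into the explicit lower bound
$$W(x) \;\ge\; \big[\,1/\psi(\hat x) + C(\hat x - x)\,\big]^{-1} \;>\; 0 \qquad \text{for every } x \le \hat x.$$
Thus $W$ cannot reach either boundary of $(0,1)$ at finite $x$, and the method-of-steps iteration proceeds indefinitely.

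Uniqueness is immediate: Picard--Lindel\"of gives uniqueness on each step, and this propagates inductively through the gluing. The main technical obstacle is checking that $G_k$ is genuinely Lipschitz in $W$ on each step, which rests on the history being $C^1$; this regularity holds for $\psi$ by hypothesis and self-propagates through the DDE to later steps, so once verified for $k=1$ the induction closes. Gluing the successive backward pieces yields the unique monotone positive $W$ on $(-\infty, \hat x]$ asserted in~\eqref{2.6}.
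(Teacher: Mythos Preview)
Your approach is essentially the same as the paper's: method of steps on intervals $[\hat x-(k+1)\ell,\hat x-k\ell]$, reduction of the DDE to an ODE on each step because $x+\ell/W(x)\ge x+\ell$ lands in the already-known history, and iteration via Picard--Lindel\"of. You in fact add useful detail the paper omits, namely the explicit lower bound $W(x)\ge[1/\psi(\hat x)+C(\hat x-x)]^{-1}$ and a more careful discussion of why $G_k$ is Lipschitz in $W$.

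There is, however, a small circularity in your monotonicity step. You write that for $x\in[x_k,x_{k-1}]$ the shifted point $x+\ell/W(x)$ lies in $(x_{k-1},\infty)$ and then conclude ``by hypothesis $W(x+\ell/W(x))>W(x)$.'' The inductive hypothesis only gives strict increase of $W$ on $(x_{k-1},\infty)$; it does \emph{not} yet tell you that $W(x)\le W(x_{k-1})$ for $x<x_{k-1}$, which is precisely what you are trying to prove. The comparison $W(x+\ell/W(x))>W(x)$ therefore uses monotonicity on the new interval before it has been established. The paper closes this gap with the standard contradiction argument: if $\tilde x$ is the largest point in the new interval with $W'(\tilde x)=0$, then $W$ is strictly increasing on $(\tilde x,\infty)$, whence $W(\tilde x+\ell/W(\tilde x))>W(\tilde x)$ forces $W'(\tilde x)>0$, a contradiction. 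Inserting this (or the equivalent open--closed continuity argument) makes your induction go through; everything else in your write-up is correct and, in places, more thorough than the original.
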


\begin{proof}
The existence and uniqueness of the solution $W(x)$ 
for the initial value problem follows from an iteration argument. 
It is understood that the derivative $W'(x)$ in~\eqref{eq:W} is the left derivative. 
We clearly have
\[
W'(\hat x-) = \frac{\psi(\hat x)}{\phi(\psi(\hat x))} \cdot
\frac{\phi(\psi(\hat x)) - \phi(\psi(\hat x+\ell/\psi(\hat x)))}{\ell/\psi(\hat x)} >0.
\]
Now consider the interval $x\in I_1 = [\hat x -\ell, \hat x]$. 
We claim that, if $W(x)$ exists on $I_1$, then $W(x)\ge 0$. 
Indeed, the lower bound $W(x)\ge0$ is clear since  0 is a critical point. 
Assuming that $W(x)$ becomes negative 
on some subset of $I_1$, then there exists a point $x_0\in I_1$ such that
\[  W(x_0)=0, \quad W'(x_0) >0 .\]
But this is not possible because by~\eqref{eq:W} we have 
\[
W'(x_0) = \frac{W^2(x_0)}{\ell \phi(W(x_0)))} \left[ \phi(W(x_0)) - \phi(\rho_+)\right]=0, \quad \mbox{where}~\rho_+ =\lim_{x\to\infty}\psi(x), 
\]
a contradiction. 
 
We further claim that, if $W(x)$ exists on $I_1$, then it is 
monotonically increasing. 
We prove by contradiction. 
Assume that $W(x)$ is not monotone on $I_1$. 
Then there exists a value  
$\tilde x $, with $\tilde x < \hat x$,  where $W'$ changes sign, such that
\begin{equation}\label{mono1}
 W'(\tilde x)=0, \quad \mbox{and}\quad W'( x)>0, \quad x>\tilde x.
 \end{equation}
However, this would imply
\[
 W'(\tilde x) =\frac{ W^2(\tilde x)}{\ell \phi(W(\tilde x))} \cdot
\left[ \phi(W(\tilde x))-\phi\left(W(\tilde x + \frac{\ell}{W(\tilde x)})\right)\right]=0,
\]
thus
\[
 \phi\left(W(\tilde x)\right)-\phi\left(W\left(\tilde x + \frac{\ell}{W(\tilde x)}\right)\right)=0
\qquad \Rightarrow\qquad 
W\left(\tilde x\right)=W\left(\tilde x + \frac{\ell}{W(\tilde x)}\right), 
\]
a contradiction to~\eqref{mono1}. 

Thus, we deduce that 
\[
 x+\ell/W(x) > \hat x, \qquad \mbox{for every} ~ x \in I_1,
 \]
which means, 
\[
 W( x+\ell/W(x) ) = \psi( x+\ell/W(x) ) , \qquad \mbox{for every} ~ x \in I_1.
 \]
Then, the equation~\eqref{eq:W} reduces to an ODE of the form
\[
 W'(x) = \mathcal{F}(W, \psi(x+\ell/W))
 \;\dot=\;\frac{W^2(x)}{\ell \phi(W(x))} \left[ \phi(W(x)) - \phi(\psi(x+\ell/W(x)))\right] .
 \]
Since $\mathcal{F}$ is Lipschitz in both arguments, $\psi$ is continuous
and Lipschitz for $W>0$, 
by standard ODE theory, the solution $W(x)$ exists and is unique on $I_1$. 

One can the iterate the argument on 
the intervals $I_k = [\hat x - (k+1)\ell, \hat x - k\ell]$, 
for $k=1,2,\cdots$, completing the proof. 
\end{proof}

\begin{remark}
For  general references on  standard theory for 
delay differential equations, see~\cite{MR0141863, MR0477368}. 
We remark that our equation~\eqref{eq:W} does not fall into the standard setting,
therefore we provide a simple proof for existence and uniqueness of solutions. 
We further note that, if the initial condition shall be monotonically decreasing,
such that $\psi'(x) <0$ for $x>\hat x$,  
the global existence of solution $W(x)$ on $x\in(-\infty,\hat x]$ fails. 
One simply observes that $W'(x) <0$, so $W(x)$ increases as $x$ decreases,
and $W'(x) $ blows up to infinity as $W(x)$ approaches 1.
\end{remark}

Next Lemma describes the asymptotic behavior at the limits as $x\to\pm\infty$.

\begin{lemma}\label{lm:4}
(Asymptotic Limits.)
Assume that $W(x)$ is a solution of~\eqref{eq:W} that satisfies the boundary condition
\[
W(-\infty)=\rho_-, \qquad W(+\infty)=\rho_+.
\]
Then, we have the followings.
\begin{itemize}
\item As $x\to\infty$, 
$W(x)$ can approach $\rho_+$ at an exponential rate only if $\rho_+ > \rho^*$.
The exponential rate $\lambda_+$ satisfies the estimate
\begin{equation}\label{lambda+}
 \lambda_+ >  \frac{2\rho_+}{\ell} \cdot
 \ln \left(1-\frac{f'(\rho_+) \rho_+}{f(\rho_+)}\right).
\end{equation}
\item 
Similarly, as 
$x\to-\infty$, 
$W(x)$ can approach $\rho_-$ at an exponential rate only if $\rho_- < \rho^*$.
The exponential rate $\lambda_-$ satisfies the estimates
\begin{equation}\label{lambda-2}
-\frac{\rho_-}{\ell}\cdot \ln \left(1-\frac{f'(\rho_-) \rho_-}{f(\rho_-)}\right)
<\lambda_- <
-\frac{2 \rho_-}{\ell} \cdot \ln \left(1-\frac{f'(\rho_-) \rho_-}{f(\rho_-)}\right).
\end{equation}
\end{itemize}
\end{lemma}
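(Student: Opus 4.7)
My plan is to substitute an exponential ansatz for $W$ near each equilibrium $\rho_\pm$, Taylor expand $\phi$ to first order, derive a scalar transcendental (characteristic) equation for the rate, and then analyze this equation with elementary one-variable calculus.

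\textbf{Right asymptote.} For the $+\infty$ behavior I would write $W(x)=\rho_+-\epsilon(x)$ with $\epsilon(x)>0$ and postulate $\epsilon(x)\sim C e^{-\lambda_+ x}$ for some $\lambda_+>0$. Substituting into~\eqref{eq:W}, expanding $\phi$ about $\rho_+$ to first order, and noting that the delayed argument $x+\ell/W(x)$ reduces to leading order to $x+\ell/\rho_+$, the leading balance becomes
\[
\lambda_+=\frac{-\rho_+^2\,\phi'(\rho_+)}{\ell\,\phi(\rho_+)}\bigl(1-e^{-\lambda_+\ell/\rho_+}\bigr).
\]
Setting $K_+:=-\rho_+\phi'(\rho_+)/\phi(\rho_+)$ and $\mu:=\lambda_+\ell/\rho_+$ reduces this to $\mu=K_+(1-e^{-\mu})$, and the identity $f(\rho)=V\rho\phi(\rho)$ gives $K_+=1-f'(\rho_+)\rho_+/f(\rho_+)$, so the lemma's formula is recovered. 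The residual $r(\mu):=K_+(1-e^{-\mu})-\mu$ satisfies $r(0)=0$, $r'(0)=K_+-1$, $r'(\mu)=0$ at $\mu=\ln K_+$, and $r(\mu)\to-\infty$. Hence a positive root $\mu^*$ exists iff $K_+>1$, and using $f(\rho_+)>0$ this is equivalent to $f'(\rho_+)<0$, i.e.\ to $\rho_+>\rho^*$ by the strict concavity of $f$. To prove~\eqref{lambda+} I would test $r$ at $\mu=2\ln K_+$: one finds $r(2\ln K_+)=K_+-K_+^{-1}-2\ln K_+=g(K_+)$, and the elementary fact that $g(K):=K-K^{-1}-2\ln K$ satisfies $g(1)=0$ with $g'(K)=(1-K^{-1})^2\ge 0$ gives $g(K_+)>0$, placing $2\ln K_+$ strictly to the left of $\mu^*$.

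\textbf{Left asymptote.} An analogous ansatz $W(x)=\rho_-+C e^{\lambda_- x}$ with $\lambda_->0$ at $x\to-\infty$, together with $K_-:=-\rho_-\phi'(\rho_-)/\phi(\rho_-)=1-f'(\rho_-)\rho_-/f(\rho_-)$ and $\nu:=\lambda_-\ell/\rho_-$, yields the characteristic equation $\nu=K_-(e^\nu-1)$. The residual $R(\nu):=K_-(e^\nu-1)-\nu$ has $R'(\nu)=K_-e^\nu-1$, so $R$ decreases on $[0,-\ln K_-]$ and then increases to $+\infty$; a positive root exists iff $R'(0)=K_--1<0$, equivalent to $f'(\rho_-)>0$, equivalent to $\rho_-<\rho^*$. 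To obtain~\eqref{lambda-2} I would test $R$ at the two candidate endpoints: at $\nu=-\ln K_-$, $R=1-K_-+\ln K_-=:p(K_-)$, and since $p(1)=0$, $p'(K)=K^{-1}-1>0$ on $(0,1)$, one has $p(K_-)<0$, placing $-\ln K_-$ to the left of the positive root; at $\nu=-2\ln K_-$, $R=K_-^{-1}-K_-+2\ln K_-=-g(K_-)>0$ by the same estimate on $g$ used above, placing $-2\ln K_-$ to the right of the positive root. Together these sandwich $\lambda_-$ as claimed.

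\textbf{Main obstacle.} The only delicate step is justifying that in the asymptotic regime the quadratic and higher-order Taylor remainders in $\phi\bigl(W(x+\ell/W(x))\bigr)-\phi(W(x))$ are subdominant to the linear terms: since these remainders are of order $\epsilon(x)^2=O(e^{-2\lambda x})$ while the linear terms are $O(e^{-\lambda x})$, the leading-order balance uniquely determines $\lambda$ through the characteristic equation and the higher-order corrections do not disturb the statement. After that reduction, the entire proof is one-variable calculus of the two functions $g$ and $p$.
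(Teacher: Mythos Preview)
Your proposal is correct and follows essentially the same route as the paper: linearize the DDE about each asymptotic value, obtain the same transcendental characteristic equation (your $K_\pm$ and $\mu,\nu$ are exactly the paper's $b,\hat b$ and $a\lambda,\hat a\lambda$), and use convexity/concavity of the residual to locate the positive root. Your verification of the bounds~\eqref{lambda+} and~\eqref{lambda-2} by evaluating the residual at $2\ln K_+$, $-\ln K_-$, $-2\ln K_-$ and checking signs via the auxiliary functions $g(K)=K-K^{-1}-2\ln K$ and $p(K)=1-K+\ln K$ is in fact more explicit than the paper, which simply asserts $\lambda_+>2\lambda_*$ ``by the properties'' of $G$ without writing out the sign computation.
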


\begin{proof} 
\textbf{Step 1.} 
Consider the asymptotic behavior as $x\to+\infty$.
By assumption we  have $W'(x) \to 0$ as  $x \to\infty$.
Now, for $x$ large, we write 
\[
W(x) =\rho_+ +\eta(x)
\]
where $\eta(x)$ is the first order perturbation.
Plugging this into~\eqref{eq:W}, and neglecting the higher order terms,
we obtain the following linearized equation for $\eta(x)$: 
\begin{equation}\label{eta}
\eta' (x) = - \phi'(\rho_+)  \cdot \frac{\rho_+^2}{\ell \cdot \phi(\rho_+)}  
\Big[ \eta(x+\ell/\rho_+) - \eta(x) \Big].
\end{equation}
Denoting the positive constants as
\begin{equation}\label{defab}
a \;\dot=\;\frac{\ell}{\rho_+}, 
\qquad 
b\; \dot=\; - \phi'(\rho_+)  \cdot \frac{\rho_+}{ \phi(\rho_+)}, 
\end{equation}
we can write
\begin{equation}\label{eta2} 
\eta'(x) = \frac{b}{a}\cdot(\eta(x+a)-\eta(x)).
\end{equation}
This is a linear delay differential equation, 
which can be solved explicitly using the characteristic equation.
Seeking solution of the form  
\begin{equation}\label{eq:eta}
\eta(x) =M e^{-\lambda x}
\end{equation} 
where $M$ is an arbitrary constant (which could be both negative or positive),
the rate $\lambda$ satisfies the characteristic equation
\begin{equation}\label{eq:Gdef}
G(\lambda)\;\dot=\;b \cdot \left( e^{-a \lambda} -1\right)+a \lambda=0.
\end{equation}
We locate all the zeros for the function $G(\lambda)$, 
in particular the positive ones. 
We observe
\begin{eqnarray}
& &G(0)=0,\mbox{}\label{G}\\
&&G'(\lambda) = -ab\; e^{-a\lambda} +a,   \qquad\qquad  G'(0)=a(1-b), \label{DG}\\
&& G''(\lambda)=a^2b\; e^{-a\lambda}>0 , \qquad\qquad  G''(0)=a^2b,\label{DDG}\\
&& G'''(\lambda) = -a^3 b\; e^{-a \lambda} <0. \label{DDDG}
\end{eqnarray}
Thus, $G(\cdot)$ is a convex function which goes through the origin. 
Typical graphs of $G(\lambda)$ for different  values of $b$
are illustrated in Figure~\ref{fig:G}. 
We have:
\begin{itemize}
\item
If $b=1$, then there is only one zero $\lambda_0=0$.
\item
If $b<1$, then there are two zeros $\{\lambda_0,\lambda_-\}$, where 
 $\lambda_- <0$.   
\item
If $b>1$, then there are two zeros $\{\lambda_0,\lambda_+\}$, 
where $\lambda_+ >0$.
\end{itemize}

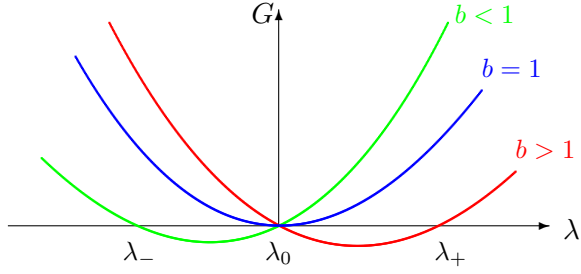
\begin{figure}[htbp]
\begin{center}
\setlength{\unitlength}{0.9mm}
\begin{picture}(80,40)(-40,-6)  % -- Left ---
\put(-40,0){\vector(1,0){80}}\put(42,-2){$\lambda$}
\put(0,0){\vector(0,1){32}}\put(-4,30){$G$}
\put(-2,-5){\small $\lambda_0$}
\put(23,-5){\small $\lambda_+$}
\put(-23,-5){\small $\lambda_-$}
\thicklines
\color{red}\qbezier(-25,30)(2,-22)(35,8)\put(35,10){\small$b>1$}
\color{green}\qbezier(-35,10)(-1,-22.5)(25,30)\put(26,30){\small$b<1$}
\color{blue}\qbezier(-30,25)(-3,-22.3)(30,20)\put(30,22){\small$b=1$}
\end{picture}
\caption{Typical graphs of $G(\lambda)$ and location of the zeros.}
\label{fig:G}
\end{center}
\end{figure}

Recall that 
\[
f(\rho)=V\rho\cdot\phi(\rho), \qquad f'(\rho^*)=0,\qquad f''(\rho) <0, 
\]
we have 
\[
 b = \frac{-\phi'(\rho_+) \rho_+}{\phi(\rho_+)}
=1-\frac{f'(\rho_+) \rho_+}{f(\rho_+)}~:~
\begin{cases}  =1, \qquad & \mbox{if}\quad \rho_+ = \rho^* ,\\
>1, & \mbox{if}\quad  \rho_+ >\rho^* ,\\
<1, & \mbox{if}\quad \rho_+ <\rho^* .
\end{cases}
\]

Thus we conclude:
\begin{itemize}
\item 
If $\rho_+=\rho^*$, then $\lambda=0$ and we have the trivial solution 
$\eta(x)\equiv0$, and thus $W(x)\equiv \rho^*$.
\item 
If $\rho_+<\rho^*$, then the other rate $\lambda_-<0$ indicates exponential
growth of $\eta(x)$, which is not valid.
The only possible solution is the trivial one.
\item
If $\rho_+>\rho^*$, then the other zero $\lambda_+>0$ indicates exponential
decay of $\eta(x)$ in the limit as $x\to\infty$. This is the valid case. 
\end{itemize}
Therefore, if $\rho_+>\rho^*$,  we can have the asymptotic limit as $x\to\infty$, 
\[
 W(x) ~\to~ \rho_+ + M e^{-\lambda_+ x} .
 \]

Finally, we derive an estimate on the rate $\lambda_+$. Let 
\[
\lambda_*= \frac{1}{a} \ln (b), \qquad\mbox{where}\quad  G'(\lambda_*)=0.
\]
By the properties~\eqref{G}-\eqref{DDDG}, we conclude the estimate 
\[
 \lambda_+ > 2 \lambda_* = \frac{2}{a} \ln (b) ,
\]
proving~\eqref{lambda+}.

\bigskip
\textbf{Step 2.} 
A similar computation can be carried out for  $x\to-\infty$. 
We write 
\[
 W(x) = \rho_- + \zeta(x)
 \]
where $\zeta(x)$ is a small perturbation.  
The linearized equation for $\zeta(x)$  becomes
\[
 \zeta'(x) = - \frac{\phi'(\rho_-) \rho_-^2}{\ell \phi(\rho_-)} \Big[ 
\zeta(x+\ell/\rho_-) -\zeta(x) \Big].
\]
Denoting the positive constants
\begin{equation}\label{hab}
 \hat a \;\dot=\; \ell/\rho_-,
\qquad
\hat b \; \dot= \; -\frac{\phi'(\rho_-) \rho_-}{\phi(\rho_-)}
=1-\frac{f'(\rho_-) \rho_-}{f(\rho_-)},
\end{equation}
and seeking solutions of the form
\begin{equation}\label{eq:zeta}
 \zeta(x) = \hat M e^{\lambda x},
 \end{equation}
we arrive at the characteristic equation
\[
 H(\lambda)\; \dot= \; \hat b(e^{\hat a \lambda} -1) - \hat a \lambda =0.
 \]
To seek positive zeros of $H(\cdot)$, we first observe that $H(0)=0$. 
Furthermore, we have
\begin{eqnarray*}
H'(\lambda)=\hat a \hat b e^{\hat a \lambda} -\hat a, &\quad&
H'(0)= \hat a (\hat b-1), \\
H''(\lambda)= \hat a^2\hat b e^{\hat a \lambda}>0, 
&\quad &
H'''(\lambda) = \hat a^3\hat b e^{\hat a \lambda}>0.
\end{eqnarray*}
Thus, positive rate $\lambda=\lambda_-$ 
exists only for the case when $\hat b <1$, 
i.e., when $\rho_- < \rho^*$.

A similar computation as for~\eqref{lambda+} leads to an estimate
with both upper and lower bounds:
\begin{equation}\label{lambda-}
 -\frac{1}{\hat a} \ln (\hat b)  <\lambda_- <  -\frac{2}{\hat a} \ln (\hat b),
\end{equation}
proving~\eqref{lambda-2}
\end{proof}

\begin{remark}\label{rk:2.1}
Lemma~\ref{lm:4} indicates that the boundary conditions at $x\to\pm\infty$
are valid only when $\rho_- \le \rho^* \le \rho_+$. 
Together with Theorem~\ref{IVP}, we conclude that stationary profiles of $W(x)$,
if they exist,
are monotonically increasing.
This corresponds to the upward jumps of the admissible shocks 
for the conservation law~\eqref{PDE}.

Consequently, the constant $M$ in~\eqref{eq:eta} is now negative, and 
$\hat M$ in~\eqref{eq:zeta} is positive. 
\end{remark}

\begin{remark}\label{rk:2.2}
If $\rho_-=\rho_+=\rho^*$, then $\lambda=0$, and 
one has the trivial solution  $W(x) \equiv \rho^*$. 
\end{remark}

\begin{remark}\label{rk:2.3}
The estimates~\eqref{lambda+}-\eqref{lambda-2} can be expressed in different ways. 
Indeed,  \eqref{lambda+} implies
\begin{equation}\label{lambda++}
\lambda_+ > \frac{2 \rho_+}{\ell} \cdot 
\ln\left(1+ \frac{c_0\rho^*}{f(\rho_+)} (\rho_+-\rho^*)\right),
\end{equation}
where $-c_0$ is an upper bound for $f''$, see~\eqref{fP}.
Similarly, if $\rho_->0$ is close to $\rho^*$, the estimate~\eqref{lambda-2} implies
\begin{equation}\label{lambda--}
-\frac{\rho_-}{\ell} \cdot \ln\left(1-\frac{c_0\rho^*}{f(\rho^*)} (\rho^*-\rho_-)\right)
< \lambda_-  <
-\frac{2\rho_-}{\ell} \cdot \ln\left(1-\frac{c_0\rho^*}{f(\rho^*)} (\rho^*-\rho_-)\right),
\end{equation}
On the other hand, if $\rho_-$ is close to 0,  we have a different estimate
\begin{equation}\label{lambda---}
-\frac{\rho_-}{\ell} \cdot \ln\left(\hat c_0 \rho_-\right)
< \lambda_-  <
-\frac{2\rho_-}{\ell} \cdot \ln\left(\hat c_0 \rho_-\right),
\end{equation}
where $-\hat c_0$ is the upper bound for $\phi'$, see~\eqref{phiP}. 
\end{remark}

\begin{remark}\label{rk:2.4}
By the estimate~\eqref{lambda++}, as $\rho_+ \to 1$ we have
$\lambda_+ \to +\infty$.  
This indicates that if $\rho_+=1$,  $W(x)$ approaches $\rho_+$ instantly. 
Thus, for some $\hat x$, we must have $W(x) =1$ for $x \ge \hat x$.

On the other end, by estimate~\eqref{lambda---}, 
as $\rho_- \to 0$, we have  $\lambda_- \to \infty$. 
Thus, if $\rho_-=0$, we must have $W(x)=0$ for $x<\hat x$, for some $\hat x$.

One  concludes that, if 
\[
\rho_-=0, \qquad \rho_+=1,
\]
the only possible profile is a step function with the jump located at some $\hat x$.
This represents the scenario where cars are bumper-to-bumper on $x>\hat x$,
and the road is empty for $x<\hat x$. 
\end{remark}

The next lemma is most interesting.
It shows that, if $W(x)$ is a stationary monotone profile such 
that the solutions $\{z_i(t)\}$  
of~\eqref{FtL} traces along, then the distribution $\{z_i(t)\}$
demonstrates a ``periodic'' pattern. 

\begin{lemma}\label{lm:2} (Periodicity.)
Let $W(x)$ be a monotone profile, and let $\{z_i(0)\}$ 
be the initial positions of cars  generated by the  profile $W(x)$.
Let $\{z_i(t)\}$ be the solution of the FtL model~\eqref{FtL} with this initial data.
Then, the followings are equivalent. 
\begin{itemize}
\item[(E1)]  
$W(x)$ satisfies the DDE~\eqref{eq:W}.

\item[(E2)]  
The solutions $\{z_i(t)\}$ exhibit the following periodic behavior. 
There exist a ``period'' $t_p$, 
such that after the period each car takes over the initial position of its leader, i.e.,
\begin{equation}\label{per}
z_i(t+t_p) = z_{i+1}(t), \qquad \forall i, \quad \forall t \ge 0.
\end{equation}
\end{itemize}
\end{lemma}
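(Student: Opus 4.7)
The plan is to prove the two implications separately.

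For $(E1)\implies(E2)$, by Lemma~\ref{lm:0} the hypothesis is equivalent to the tracing identity $W(z_i(t))=\rho_i(t)$ for all $i$ and $t\ge 0$, so~\eqref{FtL} reduces, for every $i$, to the single autonomous ODE $\dot x = V\phi(W(x))$.  Let $t_p$ be the travel time of car~$0$ from $z_0(0)$ to $z_1(0)$ along this ODE.  I check by induction on $i$ that $z_i(t_p)=z_{i+1}(0)$: at the inductive step the tracing-enforced spacing gives $z_{i+1}(t_p) = z_i(t_p) + \ell/W(z_i(t_p)) = z_{i+1}(0) + \ell/W(z_{i+1}(0)) = z_{i+2}(0)$.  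Since both $t\mapsto z_i(t+t_p)$ and $t\mapsto z_{i+1}(t)$ solve $\dot x = V\phi(W(x))$ with the same initial value $z_{i+1}(0)$, standard ODE uniqueness yields~\eqref{per}.

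For $(E2)\implies(E1)$, iterating~\eqref{per} in $i$ gives $z_i(t)=z_0(t+i t_p)$, so the entire evolution is encoded in the monotone trajectory $s\mapsto z_0(s)$.  On the range of $z_0$ I introduce the empirical profile $\tilde W(z_0(s)) \;\dot=\; \rho_0(s) = \ell/(z_0(s+t_p)-z_0(s))$, the last equality coming from~\eqref{per}.  Setting $t_0\;\dot=\;z_0^{-1}$ and using the chain rule $\tilde W'(x)=\dot\rho_0(t_0(x))/\dot z_0(t_0(x))$, substitution of $\dot z_0 = V\phi(\rho_0)$ from~\eqref{FtL} and of $\dot\rho_0 = (V/\ell)\rho_0^2[\phi(\rho_0)-\phi(\rho_1)]$ from~\eqref{rhodot}, together with the observation that $x+\ell/\tilde W(x) = z_0(s+t_p)$ so that $\tilde W(x+\ell/\tilde W(x))=\rho_1(t_0(x))$, shows that $\tilde W$ satisfies~\eqref{eq:W} on its domain.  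Because $\{z_i(0)\}$ is generated by $W$, we automatically have $\tilde W(z_i(0))=W(z_i(0))$ at every lattice point; the concluding step promotes this discrete agreement to $\tilde W\equiv W$, thereby transferring~\eqref{eq:W} to $W$ itself.

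The main obstacle lies in this last promotion step.  Lattice-point agreement plus continuity is not sufficient by itself, since a modification of $W$ inside a single gap $(z_i(0),z_{i+1}(0))$ would preserve both the generation relation and the FtL dynamics.  One overcomes this by monotone-profile rigidity: $W$ and $\tilde W$ are both monotone functions sharing the same asymptotic limits at $\pm\infty$, and $\tilde W$ already solves~\eqref{eq:W}; a uniqueness argument of the type used in Theorem~\ref{IVP} (backward solvability of the DDE from a right-infinite initial segment, together with a location-fixing condition) then forces $W=\tilde W$.  The $(E1)\implies(E2)$ direction, in contrast, is a clean reduction to scalar-ODE uniqueness and presents no such difficulty.
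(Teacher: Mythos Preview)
Your $(E1)\Rightarrow(E2)$ is essentially correct but differs from the paper's route. The paper does not induct on $i$; instead it uses tracing (via Lemma~\ref{lm:0}) to write the travel time of a car from $x$ to its leader's position $x+\ell/W(x)$ as
\[
t_p(x)=\int_x^{x+\ell/W(x)}\frac{dz}{V\,\phi(W(z))},
\]
and observes that the DDE~\eqref{eq:W} is precisely the statement $t_p'(x)\equiv 0$. This yields $z_i(t_p)=z_{i+1}(0)$ for \emph{all} $i$ simultaneously, with no induction. Your induction runs only upward in $i$; to go downward you would have to invert $y\mapsto y+\ell/W(y)$, which you do not address.

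Your $(E2)\Rightarrow(E1)$ has a genuine gap, and the fix you propose does not work. You want to promote the lattice agreement $\tilde W(z_i(0))=W(z_i(0))$ to $\tilde W\equiv W$ by ``monotone-profile rigidity'' and DDE uniqueness. But: (i) Theorem~\ref{IVP} requires the two functions to coincide on an entire half-line $[\hat x,\infty)$, not merely on a discrete lattice; (ii) at this point you do not know that $W$ solves \emph{any} delay equation---that is exactly what you are trying to prove---so uniqueness theorems for solutions of~\eqref{eq:W} say nothing about $W$; and (iii) the two-point-boundary-value uniqueness you gesture at is Theorem~\ref{tm:3}, which is proved later \emph{using} Lemma~\ref{lm:2}, so invoking it here is circular. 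Your own diagnosis in fact shows the obstruction is structural: for a single fixed distribution $\{z_i(0)\}$, one can perturb $W$ monotonically inside one gap $(z_i(0),z_{i+1}(0))$ without changing the generation relation~\eqref{eq:def1}, the FtL dynamics, or~(E2), yet destroying~\eqref{eq:W}. The paper's argument avoids this entirely: it takes the identity $\int_x^{x+\ell/W(x)}\frac{dz}{V\phi(W(z))}=t_p$ with $x$ ranging over \emph{all} reals as the content of~(E2), and differentiates in $x$ to obtain~\eqref{eq:W} directly---no auxiliary profile $\tilde W$, no promotion step. The continuously varying basepoint $x$ is exactly what probes $W$ between lattice points.
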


\begin{proof} 
\textbf{Step 1.} 
We first prove that (E2) $\Rightarrow$ (E1). 
Without loss of generality, we consider a car initially located at 
$z_0(0)=x$ for some $x$,  
and its leader, initially located at 
\[
z_1(0)=x + \frac{\ell}{W(x)}.
\]
Thus, the evolution of $z_0(t)$ satisfies the ODE
\[
 \frac{d z_0}{dt} = V \phi(W(z_0)), \qquad z_0(0)=x, \quad z_0(t_p)=z_1(0)=x+\ell/W(x).
 \]
This is a separable equation.
(E2) implies the following identity
\begin{equation}\label{GI0}
 \int_x^{x+\ell/W(x)}   \frac{1}{V \; \phi(W(z))}\, dz = \int_0^{t_p}dt
=  t_p=\mbox{constant},\qquad \forall x.
\end{equation}
Differentiating~\eqref{GI0} in $x$, we deduce
\begin{equation}\label{GI2}
\left(1-\frac{\ell W'(x)}{W(x)^2}\right) \cdot\frac{1}{\phi(W(x+\ell/W(x)))} - \frac{1}{\phi(W(x))} =0,
\end{equation}
which easily leads to~\eqref{eq:W},  proving (E1).

\textbf{Step 2.}
To prove the indication  (E1) $\Rightarrow$ (E2),
assume that $W(x)$ satisfies the DDE~\eqref{eq:W}. For a given time $t$,
let $\{z_i(t)\}$ be a distribution of cars generated by $W(x)$, 
as in Definition~\ref{def:1}. 
We write now
\[
x=z_i(t), \qquad z_{i+1}(t)=x + \frac{\ell}{W(x)}.
\]
Since $W(x)$ solves  \eqref{eq:W}, it satisfies \eqref{GI2}. 
The time it takes for the $i$th car to reach the original position of its leader $z_{i+1}$ 
is
\begin{equation}
t_p(x) = \int_x^{x+\ell/W(x)}   \frac{1}{V \; \phi(W(z))}\, dz.
\end{equation}
By~\eqref{GI2} we immediately deduce that $t_p'(x)=0$ for all $x$, 
thus $t_p(x) \equiv $ constant. 
\end{proof}

\medskip

The next Lemma connects the period $t_p$ to the limit values of $W(x)$
at $x\to\pm\infty$.

\begin{lemma}\label{lm:3}
Let $W(x)$ and $\{z_i(t)\}$ be given as in the setting of Lemma~\ref{lm:2}. 
Let $\rho_-, \rho_+$ be two states that satisfy
\begin{equation}\label{BC2}
f(\rho_-)=f(\rho_+)=\bar f, \qquad 
\rho_- \le \rho^*\le \rho_+.
\end{equation}
Then, the following additional properties are equivalent.
\begin{itemize}
\item[(E3)] 
$W(x)$ 
satisfies the boundary conditions
\begin{equation}\label{BC3}
\lim_{x\to\pm\infty} W(x)=\rho_\pm.
\end{equation}
\item[(E4)]
The period $t_p$ for $\{z_i(t)\}$ is given as
\begin{equation}\label{tp}
t_p = \frac{\ell}{\bar f} = \frac{\ell}{f(\rho_-)} = \frac{\ell}{f(\rho_+)}.
\end{equation}
\end{itemize}
\end{lemma}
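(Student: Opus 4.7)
The plan is to use a single bridging identity between (E3) and (E4): the integral representation of the period already established in the proof of Lemma~\ref{lm:2}. Since $W$ satisfies the DDE~\eqref{eq:W}, identity~\eqref{GI0} gives
\[
t_p \;=\; \int_x^{x+\ell/W(x)} \frac{dz}{V\,\phi(W(z))}, \qquad \forall\,x\in\mathbb{R}.
\]
Because the left-hand side is independent of $x$, both directions of the equivalence reduce to evaluating the limit of the right-hand side as $x\to\pm\infty$.

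For (E3) $\Rightarrow$ (E4), the plan is to fix $x$ and let $x\to+\infty$ in this identity. Under (E3), $W(x)\to\rho_+$, so the window length $\ell/W(x)$ converges to $\ell/\rho_+$, and by monotonicity and continuity of $W$ the integrand $1/(V\phi(W(z)))$ converges uniformly on this bounded window to $1/(V\phi(\rho_+))$. Passing to the limit yields $t_p=\ell/(V\rho_+\phi(\rho_+))=\ell/f(\rho_+)=\ell/\bar f$, which is (E4). The analogous computation as $x\to-\infty$ produces $t_p=\ell/f(\rho_-)$, consistent with the constraint $f(\rho_-)=f(\rho_+)=\bar f$ already imposed in~\eqref{BC2}.

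For (E4) $\Rightarrow$ (E3), we first use that $W$ is monotone on $\mathbb{R}$ (Remark~\ref{rk:2.1}) together with the a priori bound $0\le W\le 1$ to conclude that the limits $W(\pm\infty)=W_\pm$ exist, with $W_-\le W_+$. Running the same limiting argument at $\pm\infty$ in the integral identity then gives $f(W_-)=f(W_+)=\ell/t_p$, and (E4) identifies this common value with $\bar f$. The strict concavity of $f$ with unique interior maximum at $\rho^*$ (property~\eqref{fP}) guarantees that the equation $f(w)=\bar f$ has exactly the two solutions $\rho_-\le\rho^*\le\rho_+$ (which collapse to one when $\bar f=f(\rho^*)$). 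The ordering $W_-\le W_+$ then forces $W_-=\rho_-$ and $W_+=\rho_+$, yielding (E3).

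The only delicate step is justifying the interchange of limit and integral. When $\rho_-,\rho_+\in(0,1)$ strictly, $\phi$ is bounded below on a neighbourhood of the range of $W$, the window length $\ell/W$ is uniformly bounded above and below, and uniform convergence $W(z)\to\rho_\pm$ on the window follows from monotonicity plus continuity; in that regime the interchange is elementary. The degenerate extremes $\rho_-=0$ or $\rho_+=1$ correspond to the step-function profile already described in Remark~\ref{rk:2.4}, where the window either has infinite length or the integrand blows up near the singularity $\phi=0$; this boundary case is where the argument needs the most care and is where we expect the main obstacle to lie, but it can be disposed of by direct inspection since the profile reduces to a single jump.
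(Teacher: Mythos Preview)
Your proof is correct and follows essentially the same approach as the paper: both evaluate the integral identity~\eqref{GI0} for $t_p$ in the limit $x\to\pm\infty$, with the paper using an explicit $\epsilon$-squeeze where you invoke uniform convergence on the shrinking-to-$\ell/\rho_\pm$ window. For (E4)~$\Rightarrow$~(E3) the paper argues by contradiction while you proceed directly via existence of the monotone limits $W_\pm$ and identification through the strict concavity of $f$, but the content is the same.
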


\begin{proof}
\textbf{Step 1.}
We first prove that 
(E3) $\Rightarrow$ (E4). 
Assume (E3) holds, such that $W(x)$ is a monotone profile satisfies~\eqref{BC3}. 
Let $\epsilon>0$, and 
consider the limit as $x\to +\infty$.
There exists an $M$ such that for all $x \ge M$ we have 
\[
 \rho_+ -\epsilon <  W(x) \le  \rho_+.
 \]
Since $\phi'(\cdot) <0$, we have, for all $x \ge M$, 
\[
\frac{1}{\phi(\rho_+ -\epsilon)}< \frac{1}{\phi(W(x))} \leq \frac{1}{\phi(\rho_+)}. 
\]
Integrating this inequality over $[x, x+\ell/W(x)]$, one has, for all $x \ge M$, 
\[
\int_{x}^{x+\ell/W(x)} \frac{1}{\phi(\rho_+ -\epsilon)} \, dz <
 \int_{x}^{x+\ell/W(x)}   \frac{1}{ \phi(W(z))}\, dz  \, \leq \int_{x}^{x+\ell/W(x)}  \frac{1}{\phi(\rho_+)} \, dz.
\]
This gives
\[
\frac{\ell}{\rho_+} \cdot \frac{1}{\phi(\rho_+ -\epsilon)}
< V \cdot t_p \le \frac{\ell}{\rho_+-\epsilon} \cdot \frac{1}{\phi(\rho_+)}.
\]
Taking the limit $\epsilon \to 0$, we get
\[
 t_p = \frac{\ell}{V \cdot \rho_+\phi(\rho_+)} = \frac{\ell}{f(\rho_+)}.
 \]
The other limit $x\to-\infty$ can be treated in a completely similar way,
proving (E4). 

\textbf{Step 2.} 
The implication (E4) $\Rightarrow$ (E3) follows by contradiction. 
Assuming that (E4) holds, but 
\[
\lim_{x\to\infty} W(x) = \hat \rho_{\pm}, \quad 
\hat\rho_- <\rho^*<\hat \rho_+, \quad 
f(\hat \rho_{\pm}) = \hat f \not=\bar f.
\]
By the proof in Step 1 we have the contradiction
$t_p =\ell/\hat f \not= \ell/\bar f$. 
\end{proof}

%%%%%%%%%%%%%%%%%
\section{Approximate Sequence; Existence and Uniqueness 
of Traveling Wave Profiles}
\setcounter{equation}{0}

We now construct approximate solutions to $W(x)$ 
as a two-point-boundary-value problem,
and prove their convergence, thus
establishing the existence of traveling wave profiles.

\begin{theorem} \label{tm1}
(Existence.)
Given $\ell, V, \rho_-,\rho_+$, with 
\begin{equation}\label{eq:tm1a}
0 \le \rho_-\le \rho^* \le \rho_+ \le 1, \qquad f(\rho_-)=f(\rho_+),
\end{equation}
there exists a monotone stationary  profile $W(x)$ which satisfies the DDE~\eqref{eq:W}
and the ``boundary'' values
\begin{equation}\label{eq:tm1b}
 \lim_{x\to-\infty} W(x) = \rho_-,\qquad 
 \lim_{x\to+\infty} W(x) = \rho_+.
 \end{equation}
\end{theorem}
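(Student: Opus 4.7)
The plan is to produce the profile as a subsequential limit of approximate profiles, obtained by solving the delay equation~\eqref{eq:W} backwards in $x$ from initial data that mimics the linearized asymptotic behavior near $\rho_+$ furnished by Lemma~\ref{lm:4}. For each integer $n\ge 1$, I would take
$$\psi_n(x) \;:=\; \rho_+ - \frac{1}{n}\, e^{-\lambda_+ x}, \qquad x\ge 0,$$
where $\lambda_+>0$ is the positive characteristic root identified in Lemma~\ref{lm:4}. This $\psi_n$ is strictly monotone, continuous, and tends to $\rho_+$ as $x\to+\infty$, so Theorem~\ref{IVP} applied with $\hat x=0$ produces a unique strictly increasing solution $W_n$ of~\eqref{eq:W} on $(-\infty,0]$, which I extend by $\psi_n$ on $[0,\infty)$. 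Monotonicity gives $\rho_-^{(n)}:=\lim_{x\to-\infty}W_n(x)$, and a short linearization argument using Lemma~\ref{lm:4} (no positive characteristic root exists at any value above $\rho^*$) forces $\rho_-^{(n)}\le\rho^*$. Since $W_n(0)=\rho_+-1/n>\rho^*$ for $n$ large, continuity then provides a unique shift $s_n<0$ with $W_n(s_n)=\rho^*$; the normalized profile $\tilde W_n(x):=W_n(x+s_n)$ then satisfies $\tilde W_n(0)=\rho^*$.

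Next I would extract a limit. The $\tilde W_n$ are monotone and uniformly bounded in $[0,\rho_+]$, so Helly's selection principle yields a pointwise-convergent subsequence $\tilde W_n\to W$. The delay equation provides a uniform bound on $\tilde W_n'$ on every compact subset where $\tilde W_n$ stays bounded away from $1$, and Arzel\`a--Ascoli upgrades this to uniform convergence on compact sets. A key quantitative check is that the ``initial-data'' region $[-s_n,\infty)$ (on which $\tilde W_n$ equals a shift of $\psi_n$, and where the DDE need not hold) retreats to $+\infty$: the exponential rate $\lambda_+$ forces $-s_n=\lambda_+^{-1}\ln n+O(1)\to\infty$. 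Hence for each fixed $x\in\mathbb{R}$, eventually $x<-s_n$ and the DDE~\eqref{eq:W} holds at $x$ for $\tilde W_n$; passing to the limit, $W$ solves~\eqref{eq:W} on all of $\mathbb{R}$.

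It remains to identify the boundary values. Monotonicity guarantees that $\hat\rho_\pm:=\lim_{x\to\pm\infty}W(x)$ exist with $\hat\rho_-\le\rho^*\le\hat\rho_+$, and Lemma~\ref{lm:3} applied to the DDE solution $W$ yields the flux identity $f(\hat\rho_-)=f(\hat\rho_+)$. Once $\hat\rho_+=\rho_+$ is verified (equivalently, the degenerate flat limit $W\equiv\rho^*$ is ruled out), strict concavity of $f$ together with monotonicity of $W$ force $\hat\rho_-=\rho_-$. The main obstacle lies precisely in this non-degeneracy: it calls for a quantitative transfer of the exponential tail behavior of $\psi_n$ to the limit, namely a uniform bound of the form $\rho_+-\tilde W_n(x)\le Ce^{-\lambda_+ x}$ for $x\ge 0$ with $C$ independent of $n$ (compatibly with the scaling $n^{-1}e^{-\lambda_+ s_n}=O(1)$). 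This requires propagating the linearized estimate for $\psi_n$ through the nonlinear backward flow of~\eqref{eq:W}, likely via a monotonicity or comparison argument exploiting the convexity of the characteristic function $G(\lambda)$ from Lemma~\ref{lm:4}.
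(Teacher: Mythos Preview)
Your construction coincides with the paper's: since the equation is autonomous, your family $\psi_n(x)=\rho_+-\tfrac{1}{n}e^{-\lambda_+ x}$ on $[0,\infty)$ and the paper's single $\psi(x)=\rho_+-e^{-\lambda_+ x}$ on $[\hat x_n,\infty)$ with $\hat x_n\to\infty$ produce the same approximate profiles up to translation. Your compactness extraction via Helly/Arzel\`a--Ascoli is sound and in fact makes explicit a step the paper leaves implicit. But you have correctly located the crux and left it open: ruling out the degenerate limit $W\equiv\rho^*$ by means of a uniform tail bound $\rho_+-\tilde W_n(x)\le Ce^{-\lambda_+ x}$ that you only sketch.

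The paper avoids this difficulty entirely by applying the periodicity Lemmas~\ref{lm:2}--\ref{lm:3} to the approximants $W_n$ \emph{before} passing to the limit, rather than to $W$ afterwards as you do. Each $W_n$ solves~\eqref{eq:W} on a half-line, so Lemma~\ref{lm:2} assigns it a period $t_{p,n}$, and Lemma~\ref{lm:3} says the left limit is determined by the scalar relation $f(\rho_{-,n})=\ell/t_{p,n}$. The paper then computes $t_{p,n}$ by a first-order Taylor expansion of
\[
t_{p,n}=\frac{1}{V}\int_{z_{-1}^n}^{z_0^n}\frac{dz}{\phi(W_n(z))},\qquad W_n(z)\approx\rho_+-\delta_n\,e^{-\lambda_+(z-z_0^n)},
\]
obtaining $t_{p,n}=\ell/\bar f+M\delta_n$ with $\delta_n\to 0$. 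This single scalar computation gives $\rho_{-,n}\to\rho_-$ directly and forces the period of any limit to equal $\ell/\bar f\neq\ell/f(\rho^*)$, which already excludes $W\equiv\rho^*$. In short, the missing idea in your proposal is to use the period $t_p$ as the invariant that pins down both boundary values; it replaces your proposed nonlinear comparison argument with an explicit perturbation calculation.
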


\begin{proof} 
By Remarks~\ref{rk:2.2}-\ref{rk:2.4},  we rule out the trivial cases. 
For the rest of the proof, we consider
\begin{equation}\label{a1}
0 < \rho_- < \rho^* < \rho_+<1.
\end{equation}
The proof takes a few steps.

\textbf{(1).}
We first construct the sequence of approximate solutions. 
Let a sequence $\{\hat x_n\}$ be given such that 
\[
\hat x_n > 0, \qquad \hat x_n < \hat x_{n+1}, \qquad \lim_{n\to\infty} \hat x_n = \infty.
\]
We  define the function 
\[
\psi(x) =  \rho_+ -  e^{-\lambda_+ x},
\]
where $\lambda_+$ is the rate computed  in Lemma~\ref{lm:4}. 
Here we set the constant $M=-1$, since different values of it would only lead to 
a horizontal shift of the profile.  
Given $n$,  let $\psi(x)$ be the boundary condition for the DDE~\eqref{eq:W}
on $x\in[\hat x_n,\infty)$, 
and denote the corresponding solution as $W_n(x)$, for $x\le\hat x_n$.

From Theorem~\ref{IVP}, $W_n(x)$ is monotonically increasing and bounded below 
as $x\to-\infty$.  Denoting that 
\begin{equation}\label{Wlim}
\rho_{-,n} \, \dot= \, \lim_{x\to-\infty} W_n(x),
\end{equation}
it remains to show that 
\begin{equation}\label{rlim}
\lim_{n\to\infty} \rho_{-,n} = \rho_-.
\end{equation}

\textbf{(2).}
We derive an estimate for $\rho_{-,n}$.
Given an $n$ such that $\hat x_n$ is sufficiently large, so 
\[
\psi(x)  = \rho_+ -e^{-\lambda_+ x} \approx \rho_+,
\qquad \mbox{for}\quad x>\hat x_n.
\]
Consider the solution $W_n(x)$, given on $x\le \hat x_n$. 
Let $\{z_i^n\}$ be a distribution of car positions generated by
$W_n(x)$, with $z_0^n=\hat x_n$, for $i=0,-1,-2,-3,\cdots$.
Let $\{z_i^n(t)\}$ be the solution of the system of ODEs~\eqref{FtL},
for index $i<0$, and the leader $z_0^n(t)$ 
traces along the initial condition $\psi(x)$ on $x>\hat x_n$.
 
By Lemma~\ref{lm:2}, $\{z_i^n(t)\}$ ($i<0$) demonstrates periodic behavior. 
We denote the period by $t_{p,n}$.  
Note that once $t_{p,n}$ is given, we obtain the unique value of 
$\rho_{-,n}$ by the relation
\[
 \rho_{-,n} \le \rho^*, \qquad f(\rho_{-,n}) = \frac{\ell}{t_{p,n}}.
 \]
Thus, it suffices to show that 
\begin{equation}\label{tplim}
\lim_{n\to\infty} t_{p,n} = t_p =\frac{\ell}{f(\rho_\pm)}.
\end{equation}

We further observe that, thanks to the periodic behavior of $\{z_i^n(t)\}, (i<0)$,
we only need to get an estimate of the time for car located at $z_{-1}^n$,
to reach $z_0^n=\hat x_n$.  Here $z_{-1}^n$ is uniquely defined 
by the implicit relation
\[
 z_{-1}^n  + \frac{\ell}{W(z_{-1}^n)} = z_0^n.
 \]

\textbf{(3).} By the set up, $W_n(x)$ is very close to $\rho_+$
 on the interval $[z_{-1}^n, z_0^n]$, therefore an estimate on $t_{p,n}$
 can be obtained by linearization. 
 By Lemma~\ref{lm:4}, we have the first order approximation of $W_n(x)$
 on $[z_{-1}^n, z_0^n]$, denoted as 
 \[
  W_n(x) \approx  \rho_+ - e^{-\lambda_+ x},  \qquad x\in [z_{-1}^n, z_0^n].
  \]
 To simplify the notation, we denote the magnitude of the perturbation by 
 \begin{equation}\label{eq:delta}
 \delta_n \;\dot=\;  e^{-\lambda_+ z_0^n} = e^{-\lambda_+ \hat x_n}.
 \end{equation}
 The first order approximation for $W_n(x)$ can be written as
 \[
 W_n(x) = \rho_+ - \delta_n e^{-\lambda_+ (x-z_0^n)}. 
 \]
 
The corresponding distance $ z_0^n-z_{-1}^n$ is  computed approximately as
\begin{equation}\label{zz}
 z_0^n-z_{-1}^n 
 = \frac{\ell}{\rho_+ - \delta_n e^{\lambda_+ (z_0^n-z_{-1}^n)}} 
=\frac{\ell}{\rho_+} +\delta_n\cdot
 \frac{\ell}{\rho_+^2} e^{\lambda_+ \ell/\rho_+}
 + \mathcal{O}\left(\delta_n^2\right).
\end{equation}
 
 We can compute $t_{p,n}$, using a first order approximation in $\delta_n$, as 
 \begin{eqnarray*}
 t_{p,n} &=& \frac{1}{V} \int_{z_{-1}^n}^{z_0^n} \frac{1}{\phi(W(z))} dz  ~=~
  \frac{1}{V} \int_{z_{-1}^n}^{z_0^n} \frac{1}{\phi(\rho_+ - \delta_n 
e^{-\lambda_+ (z-z_0^n)})}dz\\ 
&=&  \frac{1}{V} \int_{z_{-1}^n}^{z_0^n} \frac{1}{\phi(\rho_+) - \delta_n 
e^{-\lambda_+ (z-z_0^n)} \phi'(\rho_+)}dz \\ 
&=& \frac{1}{V \cdot \phi(\rho_+)} \int_{z_{-1}^n}^{z_0^n} \left[1+ \delta_n 
e^{-\lambda_+ (z-z_0^n)} \frac{\phi'(\rho_+)}{\phi(\rho_+)}\right] \, dz\\
&=& \frac{1}{V \cdot \phi(\rho_+)} \left[(z_0^n-z_{-1}^n) + \delta_n \cdot 
\frac{\phi'(\rho_+) }{\lambda_+ \phi(\rho_+)}\left[e^{\lambda_+(z_0^n-z_{-1}^n)}-1\right]
\right]. 
 \end{eqnarray*}
 Using~\eqref{zz}, we get
 \begin{eqnarray}
 t_{p,n} &=& \frac{1}{V \cdot \phi(\rho_+)} \left[\frac{\ell}{\rho_+}  + \delta_n \cdot 
\left\{\frac{\ell }{\rho_+^2} e^{\lambda_+ \ell/\rho_+}+
\frac{\phi'(\rho_+) }{\lambda_+ \phi(\rho_+)}\left[e^{\lambda_+\ell/\rho_+}-1\right]
\right\}
\right] \nonumber\\[2mm]
&= &t_p + M \delta_n, \label{tpn}
 \end{eqnarray}
 where $M$ is a positive constant, depending on the data $\rho_+,\phi,V,\ell$, 
 but not on $\delta_n$.
As $n\to\infty$, $\delta_n \to 0$, and we conclude~\eqref{tplim}, 
completing the proof.
\end{proof}

In Figure~\ref{fig:SimE} we present some numerical simulations 
of the approximate solutions $W_n(x)$. 
We  obverse the convergence as $\hat x_n\to\infty$. 
Furthermore, from~\eqref{tpn} it holds that $t_{p,n} > t_p$, 
indicating $\rho_{-,n} < \rho_-$, which is consistent with the simulation results.

\begin{figure}[htbp]
\begin{center}
\includegraphics[width=10cm]{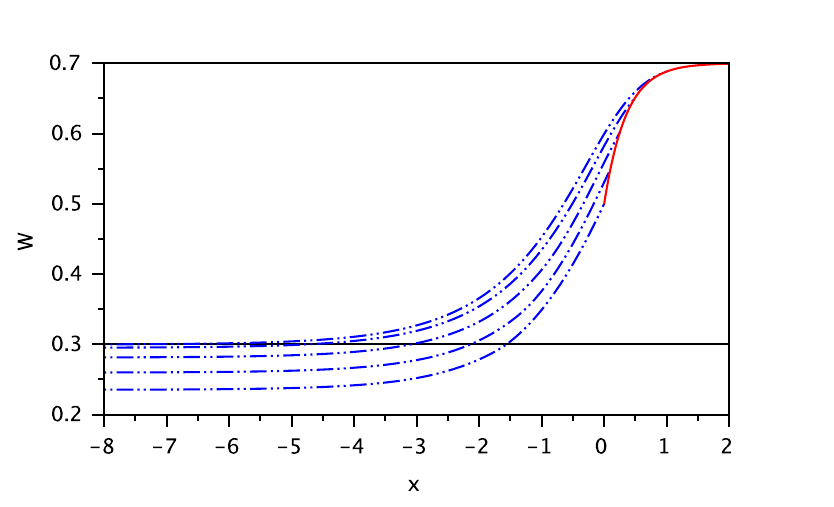}
\caption{Numerical simulations for the approximate sequence $W_n(x)$ for various
values of $\hat x_n$.  
We use $\rho_-=0.3$, $\rho_+=0.7$, $\ell=0.5$, $V=1$, and $\phi(\rho)=1-\rho$.
The solid curve is the graph of $\psi(x)=\rho_+-0.2 e^{-\lambda_+x}$, plotted on the 
interval $0\le x\le 2$. 
The dotted curves are plots for $W_n(x)$ with $\hat x_n=0, 0.1, 0.25, 0.5, 1$.}
\label{fig:SimE}
\end{center}
\end{figure}

Once the existence of the profile $W(x)$ is proved,
we  establish the uniqueness of the solution 
for the ``two-point-boundary-value-problem'' for the DDE~\eqref{eq:W}.

\begin{theorem} \label{tm:3}
(Uniqueness.)
Consider the settings of Theorem~\ref{tm1}. 
The solution $W(x)$ is unique up to a horizontal shift.
\end{theorem}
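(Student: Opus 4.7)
The plan is to deduce uniqueness from two ingredients: (i) the exponential asymptotics at $x\to+\infty$ supplied by Lemma~\ref{lm:4}, which fixes a profile up to a single real parameter, and (ii) the backward-IVP uniqueness in Theorem~\ref{IVP}, which propagates equality on a right half-line to equality on all of $\mathbb{R}$.

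Let $W_1,W_2$ be two stationary profiles sharing the same $\rho_\pm$ as in~\eqref{eq:tm1a}; the degenerate cases are already handled in Remarks~\ref{rk:2.2}--\ref{rk:2.4}. By Lemma~\ref{lm:4},
\[
W_i(x) \;=\; \rho_+ + M_i\, e^{-\lambda_+ x} + o\bigl(e^{-\lambda_+ x}\bigr), \qquad x\to +\infty,
\]
with $M_i<0$. First I would shift $W_2$ horizontally to $W_2(\cdot - s^*)$ with $s^* = \lambda_+^{-1}\log(M_1/M_2)$; this turns the leading coefficient of $W_2$ into $M_1$, so after this (uniquely determined) shift we may assume $M_1=M_2$.

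The crux of the argument is then to promote matching of the leading asymptotic coefficient to exact coincidence of $W_1$ and $W_2$ on some right tail $[\hat x,\infty)$. Substituting the ansatz $W_i(x)=\rho_+ + \sum_{k\ge 1} a_k^{(i)} e^{-k\lambda_+ x}$ into~\eqref{eq:W} and matching powers of $e^{-\lambda_+ x}$ yields, at each order $k\ge 2$, a linear equation for $a_k^{(i)}$ with nonvanishing coefficient $G(k\lambda_+)$ (cf.~\eqref{eq:Gdef}), which recursively determines every $a_k^{(i)}$ from $a_1^{(i)}$. Since $a_1^{(1)}=a_1^{(2)}$ after Step 1, the two formal expansions coincide identically. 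To lift this formal identity to a pointwise one, I would recast~\eqref{eq:W} on $[\hat x,\infty)$ as a fixed-point problem in the weighted space $\{\eta\in C([\hat x,\infty))\,:\,\|e^{\lambda_+ x}\eta\|_\infty<\infty\}$; since the linearization at $\rho_+$ possesses only one positive characteristic root $\lambda_+$ (Lemma~\ref{lm:4}), a contraction argument furnishes a one-parameter stable manifold of DDE solutions decaying to $\rho_+$, parametrized by $M$. Any two solutions with the same $M$ lie on the same orbit and therefore coincide on $[\hat x,\infty)$ for $\hat x$ sufficiently large. Applying Theorem~\ref{IVP} with this common tail as initial data then produces a unique backward extension, and hence $W_1\equiv W_2$ on all of $\mathbb{R}$.

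The main obstacle is the stable-manifold step. Equation~\eqref{eq:W} is not a standard retarded DDE, because the delay $\ell/W(x)$ itself depends on the unknown, so the classical stable-manifold theorem for DDEs does not apply verbatim. A natural remedy is to freeze the delay at its limiting value $\ell/\rho_+$ to extract the leading linear part, and then to treat the drift $\ell/W(x)-\ell/\rho_+$ as a higher-order perturbation controlled by the weighted exponential norm; verifying that the resulting fixed-point map is a genuine contraction in a small neighborhood of $\rho_+$ is the bulk of the technical work, and once this is accomplished the remainder of the argument proceeds as described above.
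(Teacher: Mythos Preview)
Your approach is plausible but takes a genuinely different route from the paper's.  The paper never invokes a stable-manifold theorem or matches asymptotic coefficients.  Instead it argues by contradiction using two elementary ingredients you did not touch: the periodicity Lemma~\ref{lm:2}, which says that for \emph{any} profile $W$ satisfying~\eqref{eq:W} and~\eqref{BC} the quantity $\int_{z_0}^{z_0+\ell/W(z_0)}\frac{dz}{V\phi(W(z))}$ equals the fixed number $t_p=\ell/\bar f$ for every $z_0$, and the concavity Corollary~\ref{cor.3a}.  After a horizontal shift one arranges $W(\hat x)=\bar W(\hat x)$ close to $\rho_+$ with $W'(\hat x)>\bar W'(\hat x)$; the DDE at $\hat x$ then forces $W(z_1)>\bar W(z_1)$ where $z_1=\hat x+\ell/W(\hat x)$, and concavity upgrades this to $W>\bar W$ on the whole interval $(\hat x,z_1]$.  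But then the two integrals over $[\hat x,z_1]$ are strictly ordered, contradicting the fact that both equal $t_p$.  This is short and entirely self-contained.

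What your route buys is generality: the stable-manifold framework does not rely on the periodicity structure and would transfer to perturbations of~\eqref{eq:W} where no such conserved quantity is available.  The price is exactly the obstacle you identify---a contraction/stable-manifold theorem for a DDE with \emph{state-dependent} delay is not off the shelf, and Lemma~\ref{lm:4} as stated only analyzes the linearized characteristic equation, not the full nonlinear tail, so you would need to supply that analysis before you can even assert the expansion $W_i(x)=\rho_++M_ie^{-\lambda_+x}+o(e^{-\lambda_+x})$ that your first step uses.  In short, your outline is a reasonable program, but the paper's periodicity argument dispatches the result with far less overhead.
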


\begin{proof}
We first consider the trivial cases.  
If $\rho_- = \rho_+=\rho^*$, the only monotone graph is $W(x)\equiv \rho^*$.
If $\rho_- = 0$ and  $\rho_+ =1$, then $t_p \to \infty$ and nothing moves,
so the flux must be 0 everywhere. 
The only  monotone solution is a unit step function. 
In the rest of the proof, we assume 
\[
0<\rho_- <\rho^*<\rho_+ <1.
\]
We prove by contradiction.
Consider the settings of Theorem~\ref{tm1}, and let $W(x), \bar{W}(x)$ 
be two  solutions which are different.
Assume that, after some horizontal shift, the graphs of $W(x)$ and $\bar{W}(x)$ 
intersect at a point $\hat x$ so that
\[ W(\hat x) = \bar W(\hat x)\]
and 
\[ W(x) > \bar W(x) , \quad \mbox{for} \quad \hat x < x < \hat x + \ell/W(\hat x).\] 
Then, by the periodical property, we have
\[
t_p = \int_{\hat x}^{\hat x+\ell/W(\hat x)} \frac{1}{V \phi(W(z))} dz
> \int_{\hat x}^{\hat x+\ell/\bar W(\hat x)} \frac{1}{V \phi(\bar W(z))} dz =t_p,
\]
a contradiction.

This means that, if
the graphs of $W$ and $\bar W$ cross each other at $\hat x$, then they must cross each 
other at least one more time on $(\hat x, \hat x + \ell/W(\hat x))$. 
Thus, they must cross each other infinitely many times for $x\in\mathbb{R}$.

We now freeze the graph of $W(x)$, and shift the graph of $\bar W(x)$ to
the right until they touch each other only at $\bar x$ tangentially, 
such that 
\[
W(\bar x) =\bar W(\bar x), \qquad W(x) > \bar W(x) \qquad  \mbox{for} \quad \bar x <x< \bar x+ \ell/W(\bar x).
\]
Again, by periodicity, we get 
\[
t_p = \int_{\bar x}^{\bar x+\ell/W(\bar x)} \frac{1}{V \phi(W(z))} dz
> \int_{\bar x}^{\bar x+\ell/\bar W(\bar x)} \frac{1}{V \phi(\bar W(z))} dz =t_p,
\] 
a contradiction. 

We conclude that the graphs of $W$ and $\bar W$ 
either completely coincide or never cross each other. 
Then they must be horizontal shifts of each other, proving the uniqueness. 
\end{proof}

\paragraph{Numerical simulations.} 
Various profiles of $W(x)$ are plotted in Figure~\ref{fig:MW}, 
for various values of $\rho_\pm$. 
Here we use $\ell=0.1$ and 
$\phi(\rho)=1-\rho$, such that $\rho^*=0.5$.  
We plot the graphs of $W(x)$ that connect the following pairs of 
limit values of $(\rho_-,\rho_+)$ 
\[
(0.4, 0.6), \quad (0.3, 0.7),\quad (0.2, 0.8), \quad (0.1, 0.9). % , \quad (0.01,0.99).
\]
The profiles are simulated numerically, 
obtained as the limits of the approximate sequences described in Theorem~\ref{tm1}. 
The profiles are further shifted horizontally such that $W(0)=0.5$. 

\begin{figure}[htbp]
\begin{center}
\includegraphics[width=10cm,clip,trim=5mm 0mm 18mm 8mm]{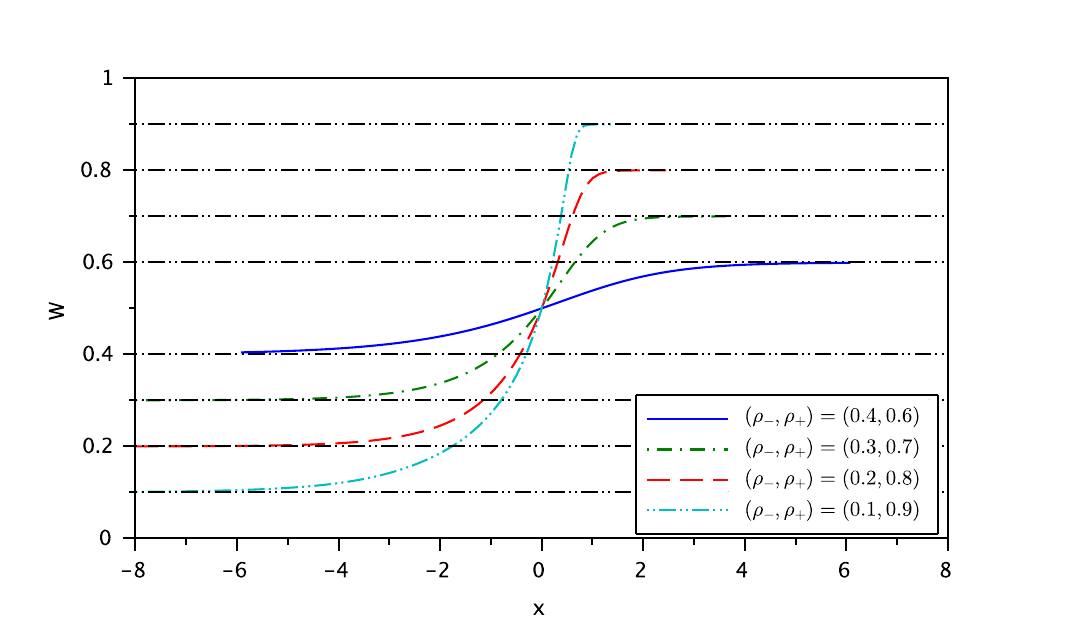}
\caption{Numerical simulations of stationary profiles $W(x)$ with various values of  
$(\rho_-,\rho_+)$.  }
% We observe the convergence to a step function as $(\rho_-,\rho_+)\to (0,1)$. }
\label{fig:MW}
\end{center}
\end{figure}

We make a couple of observations. 

(1).  For smaller values of $(\rho_+-\rho_-)$, the profiles $W(x)$ 
has a smaller value of $W'(0)$.  We provide a formal argument. 
For small $\ell$, $W(x)$ can be approximated by a linear function 
\[
 W(x) = W(0) + \sigma x, \qquad \sigma \;\dot=\; W'(0),
 \]
for $x$ close to $0$.  
Writing out only the first order approximations, we have
\[
 z_0 = W(0)=\rho^*=0.5, \qquad z_1= \frac{\ell}{W(0)} = \frac{\ell}{\rho^*}= 2\ell,
\qquad W(z_1) = \rho^* + \sigma \frac{\ell}{\rho^*} = 0.5+2\ell\sigma,
\]
and
\[
 f^* = f(\rho^*)= V/4, \qquad \bar f = f(\rho_\pm)=V\rho_\pm (1-\rho_\pm).
 \]
The periodic property gives
\[
  t_p = \frac{\ell}{\bar f} = \frac{1}{V}\int_0^{z_1} \frac{1}{1-W(x)} dx
= \frac{1}{V}\int_{\rho^*}^{\rho^*+\sigma\ell/\rho^*} \frac{dW}{\sigma(1-W)}.
\]
Working out the integration, we get
\[
\frac{V\ell}{\bar f} = -\frac{1}{\sigma} \ln \frac{1-\rho^*-\sigma \ell/\rho^*}{1-\rho^*}.
\]
Multiplying both sides by $-\sigma$ and then taking the exponential function on both sides, we obtain
\[
e^{-V\ell \sigma/\bar f} = \frac{1-\rho^*-\sigma \ell/\rho^*}{1-\rho^*} =1- \frac{\ell\sigma/\rho^*}{1-\rho^*} = 1- \frac{V\ell\sigma}{f^*} . 
\]
Moving everything to the right hand side, it gives the equation
\[
 K(\sigma) \;\dot=\;  1- \frac{V\ell\sigma}{f^*}  - e^{-V\ell \sigma/\bar f} =0.
 \]
Recall that $f^*=V/4$ is a constant.  The function $K(\sigma)$ has the following properties: 
\[
 K(0)=0, \qquad
K'(0) = V \ell (1/\bar f - 1/f^*) \ge 0, \qquad
K''(\sigma)= -(V\ell/\bar f)^2 e^{-V\ell \sigma/\bar f} <0.
\]
If $\bar f=f^*$, the only zero for $K(\sigma)$ is $\sigma=0$. 
When $\bar f < f^*$, then $K'(0) >0$, and there exists another positive
zero $\sigma_+$  for $K(\sigma)$.  
One can easily verify that $W'(0)=\sigma_+$ decreases as $\bar f$ increases 
to the value $f^*$, in correspondence to our simulation result.

(2). The profile $W(x)$ is not symmetric about $x=0$ in the 
sense that 
the asymptotic limit $\rho_+$ is approached much faster than
the limit $\rho_-$. 
Recall Lemma~\ref{lm:4}.  For this simulation, $\rho_-$ and $\rho_+$ 
locate symmetrically around $\rho^*=0.5$ such that $ \rho_- + \rho_+ =1$.
The estimates~\eqref{lambda+} and ~\eqref{lambda-2} give
\[
 \lambda_+ > \frac{2 \rho_+}{\ell} \ln(\rho_+/\rho_-), 
\qquad
\lambda_- < - \frac{2\rho_-}{\ell} \ln(\rho_-/\rho_+).
\]
Since $\rho_- < \rho_+$, we have $\lambda_- < \lambda_+$. 

We remark that this is different from a viscous shock profile, where the 
diffusion is uniform and the profile is 
odd symmetric about the location of the shock.

%%%%%%%%%%%%%%%
\section{Stability of the discrete traveling waves}
\setcounter{equation}{0}

We now show that the traveling wave profiles are local attractors for the solution of the FtL model.

\begin{theorem}\label{tm2} 
(Local Stability.) 
Let $W(x)$ be the unique stationary profile established in  Theorem~\ref{tm1}  with 
\[
W(0) = \rho^*, \qquad \mbox{where}\quad f'(\rho^*)=0.
\]
Let $\{z_i(t),\rho_i(t)\}$ be the solution of the ODEs~\eqref{FtL} with the initial data
$\{z_i(0), \rho_i(0)\}$.
Assume that there exist values $h_+,h_-$ with $h_+<h_-$, 
such that the initial data satisfies
\begin{equation}\label{eq:tm2.3}
W(z_i(0)-h_+)  \ge \rho_i(0) \ge W(z_i(0) - h_-), \qquad \forall i \in\mathbb{Z}. 
\end{equation}
Then, there exists a constant $\bar h$ such that 
\begin{equation}\label{eq:tm2.4}
\lim_{t\to\infty} W(z_i(t)-\bar h) - \rho_i(t) =0, \qquad \forall  i \in\mathbb{Z}. 
\end{equation}
\end{theorem}

Theorem~\ref{tm2} implies that 
as $t\to\infty$,  $\{z_i(t),\rho_i(t)\}$
approaches asymptotically a distribution generated by the profile $W(x-\bar h)$.

\begin{proof} 
Since $W(x)$ is monotonically increasing, with 
$\lim_{x\to\pm\infty} W(x) =\rho_\pm$, 
then for any point $(z,\rho)$ with $\rho_- < \rho<\rho_+$, there exists a 
unique value $h$ such that  $\rho = W(z-h)$.
We define the function
\[
 H(z,\rho) \;\dot=\; h, \qquad \mbox{where}\quad \rho = W(z-h) \quad \mbox{and}\quad 
 \rho_- < \rho <\rho_+.
\]
Then, 
for $t\ge 0$ and for each $i$, let 
\begin{equation}\label{eq:tm2.1}
h_i(t) = H(z_i(t),\rho_i(t)),\qquad \mbox{where} \quad \rho_i(t) =W(z_i(t)-h_i(t)).
\end{equation}

Denote 
\[
y_i(t) = z_i(t)-h_i(t),
\qquad
\mbox{so}\qquad
\rho_i(t) = W(y_i(t)).
\]
Differentiating in $t$, we get
\[
\dot \rho_i = W'(y_i) \dot y_i = W'(y_i) (\dot z_i - \dot h_i). 
\]
Using that 
\begin{eqnarray*}
\dot z_i &=& V \phi(\rho_i),\\
\dot \rho_i &=& \frac{V}{\ell}\rho_i^2\left(\phi(\rho_i) - \phi(\rho_{i+1})\right),\\
W'(y_i) &=& \frac{\rho_i^2}{\ell \phi(\rho_i)} \left[ \phi(\rho_i) - \phi(W(y_i+\ell/\rho_i))\right],
\end{eqnarray*}
we get
\begin{eqnarray*}
\dot h_i &=& \dot z_i -\frac{\dot \rho_i}{W'(y_i)} 
= V \phi(\rho_i) - V \phi(\rho_i) \frac{\phi(\rho_i) - \phi(\rho_{i+1})}{\phi(\rho_i) - \phi(W(y_i+\ell/\rho_i))}\\
&=& \frac{V \phi(\rho_i) }{ \left[ \phi(W(y_i)) - \phi(W(y_i+\ell/\rho_i))\right] }\cdot 
\left[  \phi(\rho_{i+1})-  \phi(W(y_i+\ell/\rho_i))\right]. 
\end{eqnarray*}
Then, if $h_{i}(t) < h_{i+1}(t)$, we have
\[W(y_i+\ell/\rho_i) > \rho_{i+1}.\]
Since $\phi$  is a monotonically decreasing function, we have
\[ \phi(W(y_i+\ell/\rho_i))<\phi(\rho_{i+1}) \quad\mbox{and}\quad
\phi(W(y_i)) > \phi(W(y_i+\ell/\rho_i))
,\]
which implies that $ \dot h_i (t)>0$. 
Similarly, if $h_{i}(t) > h_{i+1}(t)$ then we have $ \dot h_i(t) <0$.

Now we define
\begin{equation}\label{eq:tm2.2}
h^\sharp(t) \;\dot=\; \min_{i} h_i(t), \qquad 
h^\flat(t) \;\dot=\; \max_{i} h_i(t).
\end{equation}

It suffices to show that 
\begin{equation}\label{eq:tm2.5}
\lim_{t\to\infty}  \left[h^\flat (t)- h^\sharp(t)\right] =0.
\end{equation}

Indeed, for any given $t\ge 0$, from the previous discussion we have the following.
\begin{itemize}
\item Let $h_j(t)$ be a maximum.  If $h_j(t) >h_{j+1}(t)$, then $\dot h_j <0$; If 
$h_j(t) =h_{j+1}(t)$, then $\dot h_j =0$ and $h_{j+1}(t)$ is also a maximum;
\item Let $h_k(t)$ be a minimum.  If	 $h_k(t) <h_{k+1}(t)$, then $\dot h_k >0$;
If $h_k(t) =h_{k+1}(t)$, then $\dot h_k =0$ and $h_{k+1}(t)$ is also a minimum.
\end{itemize}
Then, 
\[
\frac{d}{dt} \left[h^\flat (t)- h^\sharp(t)\right] \le 0, 
\]
and the limit 
\[
\lim_{t\to\infty}  \left[h^\flat (t)- h^\sharp(t)\right] 
\] 
exists and is non-negative. 
To show that the limit must be 0, we use contradiction and assume the opposite, such that
\[
\lim_{t\to\infty}  \left[h^\flat (t)- h^\sharp(t)\right] = d_h>0,
\]
and let $\{\tilde z_i,\tilde\rho_i\}$ be the asymptotic  car distribution, with the corresponding values of
$\{\tilde h_i\}$. 
Now, take $\{\tilde z_i,\tilde\rho_i\}$ as the initial data and solve the system of 
ODEs~\eqref{FtL}.  
There exists an index $j$ where $\tilde h_j$ is the maximum with 
$\tilde h_j > \tilde h_{j+1}$. 
By the previous discussion we have 
$\frac{d}{dt}\tilde h_j <0$. 
If this is the isolated maximum, then $\frac{d}{dt} h^\flat <0$, a contradiction. 
If $\tilde h_{j-1}$ is also a maximum, then after an arbitrarily small amount of time 
we have $\frac{d}{dt} \tilde h_{j-1} <0$ so $\frac{d}{dt} h^\flat <0$, still a contradiction. 
Thus, we conclude~\eqref{eq:tm2.5}, completing the proof. 
\end{proof}

%%%%%%%%%%%%%%%
\section{Extension to general traveling waves}

One can extend the analysis to traveling waves with speed different from $0$, by a simple 
coordinate shift.  
Let $V\sigma$ be the wave speed and 
let $\xi = x-V \sigma t$ be the shifted space coordinate.
Let $\zeta_i(t)$ be the position of the $i$th car in the shifted coordinate, 
and $\rho_i$ the discrete density.
We have 
\[ \dot \zeta_i = \dot z -V\sigma = V\phi(\rho_i)-V\sigma  = V(\phi(\rho_i) -\sigma).\]
Since the density is not affected by a horizontal shift,
the ODE for $\rho_i$ is unchanged. 

Consider  a traveling wave profile $\mathcal{W}(\xi) = W(x-V\sigma t)$. 
We must have
\[ 
\mathcal{W}(\zeta_i(t)) = \rho_i(t), \qquad \forall t>0.
\]
This leads to the DDE:
\[
\mathcal{W}'(\xi) = \frac{\mathcal{W}^2(\xi)}{\ell(\phi(\mathcal{W}(\xi))-\sigma)} \Big[ \phi(\mathcal{W}(\xi)) - \phi(\mathcal{W}(\xi+\ell/\mathcal{W}(\xi)))\Big].
\]
The corresponding conservation law is
\[
 \rho_t + f(V,\rho)_\xi =0, \qquad \mbox{where} \quad 
f(V,\rho) = V \rho(1-\rho-\sigma).
\]
The analysis for the stationary traveling wave can be applied here with minimal
modifications.

%%%%%%%%%%%%%%%%%%%%%%
\section{Concluding Remark}

In this paper we study traveling wave profiles of  a particle model 
for traffic flow, i.e., the follow-the-leader (FtL) ODE models for car positions.
Given any densities  $\rho_\pm$ at $x\to \pm\infty$, 
with $\rho_- < \rho_+$,
we prove that there exists a unique traveling  wave profile for the FtL model.
Furthermore, such profiles are locally stable which attract nearby solutions
of the FtL model.
In the limit as  $\ell\to 0$, 
the traveling waves converge to admissible shocks for the
solution of the conservation law~\eqref{PDE}.
Our results fill a gap  in existing theory on traveling waves.
The admissible conditions
derived from our result
are in accordance to the counter part for the viscous equation
\[
 \rho_t + f(\rho)_x = \ve \rho_{xx},
 \]
where stable viscous shocks only exist for upward jumps. 

It's  interesting and also non-trivial to study   
the same particle model on a road with rough conditions. 
For example, let $\kappa(x)$ denote  the speed limit (which reflects the road condition),
and assume that it is a piecewise constant function with a jump at $x=0$.  
One would like to seek stationary traveling waves for the FtL model,
around $x=0$. 
The corresponding macroscopic model
\[
\rho_t + f(\rho,\kappa(x))_x =0
\]
 is a scalar conservation law with discontinuous flux. 
 In existing literature, 
admissibility conditions on the jump at $x=0$ are derived 
through the viscous model
\[
 \rho_t + f(\rho,\kappa(x))_x =\ve \rho_{xx}
 \]
and take the vanishing viscosity limit $\ve\to 0+$. 
However, our preliminary analysis shows a rather different scenario
for limits of the FtL model, as $\ell \to 0$, where many of the 
vanishing viscosity limits are actually not admissible.
Details are in a forthcoming work~\cite{ShenDDDE2017}.

\section*{Acknowledgement} 
The authors would like to thank the anonymous reviewer for the careful reading of the 
manuscript and useful comments which led to an improved version of this paper.

\bibsection \begin{biblist}[\small]
%\begin{thebibliography}{19}

\bib{MR1952890}{article}{
   author={Argall, Brenna},
   author={Cheleshkin, Eugene},
   author={Greenberg, J. M.},
   author={Hinde, Colin},
   author={Lin, Pei-Jen},
   title={A rigorous treatment of a follow-the-leader traffic model with
   traffic lights present},
   journal={SIAM J. Appl. Math.},
   volume={63},
   date={2002},
   number={1},
   pages={149--168},
   issn={0036-1399},
   review={\MR{1952890}},
   doi={10.1137/S0036139901391215},
}

\bib{MR2727134}{article}{
   author={Aubin, Jean-Pierre},
   title={Macroscopic traffic models: shifting from densities to
   ``celerities''},
   journal={Appl. Math. Comput.},
   volume={217},
   date={2010},
   number={3},
   pages={963--971},
   issn={0096-3003},
   review={\MR{2727134}},
   doi={10.1016/j.amc.2010.02.032},
}

\bib{MR1952895}{article}{
   author={Aw, A.},
   author={Klar, A.},
   author={Materne, T.},
   author={Rascle, M.},
   title={Derivation of continuum traffic flow models from microscopic
   follow-the-leader models},
   journal={SIAM J. Appl. Math.},
   volume={63},
   date={2002},
   number={1},
   pages={259--278},
   issn={0036-1399},
   review={\MR{1952895}},
   doi={10.1137/S0036139900380955},
}

\bib{MR3253235}{article}{
   author={Bellomo, Nicola},
   author={Bellouquid, Abdelghani},
   author={Nieto, Juanjo},
   author={Soler, Juan},
   title={On the multiscale modeling of vehicular traffic: from kinetic to
   hydrodynamics},
   journal={Discrete Contin. Dyn. Syst. Ser. B},
   volume={19},
   date={2014},
   number={7},
   pages={1869--1888},
   issn={1531-3492},
   review={\MR{3253235}},
   doi={10.3934/dcdsb.2014.19.1869},
}

\bib{MR2006201}{article}{
   author={Colombo, Rinaldo M.},
   author={Marson, Andrea},
   title={A H\"older continuous ODE related to traffic flow},
   journal={Proc. Roy. Soc. Edinburgh Sect. A},
   volume={133},
   date={2003},
   number={4},
   pages={759--772},
   issn={0308-2105},
   review={\MR{2006201}},
   doi={10.1017/S0308210500002663},
}

\bib{MR3217759}{article}{
   author={Colombo, R. M.},
   author={Rossi, E.},
   title={On the micro-macro limit in traffic flow},
   journal={Rend. Semin. Mat. Univ. Padova},
   volume={131},
   date={2014},
   pages={217--235},
   issn={0041-8994},
   review={\MR{3217759}},
   doi={10.4171/RSMUP/131-13},
}

\bib{MR3541527}{article}{
   author={Cristiani, Emiliano},
   author={Sahu, Smita},
   title={On the micro-to-macro limit for first-order traffic flow models on
   networks},
   journal={Netw. Heterog. Media},
   volume={11},
   date={2016},
   number={3},
   pages={395--413},
   issn={1556-1801},
   review={\MR{3541527}},
   doi={10.3934/nhm.2016002},
}

\bib{MR3356989}{article}{
   author={Di Francesco, M.},
   author={Rosini, M. D.},
   title={Rigorous derivation of nonlinear scalar conservation laws from
   follow-the-leader type models via many particle limit},
   journal={Arch. Ration. Mech. Anal.},
   volume={217},
   date={2015},
   number={3},
   pages={831--871},
   issn={0003-9527},
   review={\MR{3356989}},
   doi={10.1007/s00205-015-0843-4},
}

\bib{MR0141863}{article}{
   author={Driver, Rodney D.},
   title={Existence and stability of solutions of a delay-differential
   system},
   journal={Arch. Rational Mech. Anal.},
   volume={10},
   date={1962},
   pages={401--426},
   issn={0003-9527},
   review={\MR{0141863}},
   doi={10.1007/BF00281203},
}

\bib{MR0477368}{book}{
   author={Driver, R. D.},
   title={Ordinary and delay differential equations},
   note={Applied Mathematical Sciences, Vol. 20},
   publisher={Springer-Verlag, New York-Heidelberg},
   date={1977},
   pages={ix+501},
   isbn={0-387-90231-7},
   review={\MR{0477368}},
}

\bib{MR3605557}{article}{
   author={Goatin, Paola},
   author={Rossi, Francesco},
   title={A traffic flow model with non-smooth metric interaction:
   well-posedness and micro-macro limit},
   journal={Commun. Math. Sci.},
   volume={15},
   date={2017},
   number={1},
   pages={261--287},
   issn={1539-6746},
   review={\MR{3605557}},
   doi={10.4310/CMS.2017.v15.n1.a12},
}

\bib{MR3115842}{article}{
   author={Guerra, Graziano},
   author={Shen, Wen},
   title={Existence and stability of traveling waves for an
   integro-differential equation for slow erosion},
   journal={J. Differential Equations},
   volume={256},
   date={2014},
   number={1},
   pages={253--282},
   issn={0022-0396},
   review={\MR{3115842}},
   doi={10.1016/j.jde.2013.09.003},
}

\bib{MR3443431}{book}{
   author={Holden, Helge},
   author={Risebro, Nils Henrik},
   title={Front tracking for hyperbolic conservation laws},
   series={Applied Mathematical Sciences},
   volume={152},
   edition={2},
   publisher={Springer, Heidelberg},
   date={2015},
   pages={xiv+515},
   isbn={978-3-662-47506-5},
   isbn={978-3-662-47507-2},
   review={\MR{3443431}},
   doi={10.1007/978-3-662-47507-2},
}

\bib{HoldenRisebro}{article}{
   author={Holden, H.},
   author={Risebro, N. H.},
   title={Continuum Limit Of Follow-The-Leader Models -- a short proof},
   journal={To appear in DCDS},
   date={Preprint 2017},
}

\bib{HoldenRisebro2}{article}{
   author={Holden, H.},
   author={Risebro, N. H.},
   title={Follow-the-Leader Models can be viewed as a numerical approximation to the Lighthill-Whitham-Richards model for traffic flow},
   journal={Preprint},
   date={2017},
}

\bib{MR0267257}{article}{
   author={Kru\v zkov, S. N.},
   title={First order quasilinear equations with several independent
   variables. },
   language={Russian},
   journal={Mat. Sb. (N.S.)},
   volume={81 (123)},
   date={1970},
   pages={228--255},
   review={\MR{0267257}},
}

\bib{MR0072606}{article}{
   author={Lighthill, M. J.},
   author={Whitham, G. B.},
   title={On kinematic waves. II. A theory of traffic flow on long crowded
   roads},
   journal={Proc. Roy. Soc. London. Ser. A.},
   volume={229},
   date={1955},
   pages={317--345},
   issn={0962-8444},
   review={\MR{0072606}},
   doi={10.1098/rspa.1955.0089},
}

\bib{MR0075522}{article}{
   author={Richards, Paul I.},
   title={Shock waves on the highway},
   journal={Operations Res.},
   volume={4},
   date={1956},
   pages={42--51},
   issn={0030-364X},
   review={\MR{0075522}},
}

\bib{MR3177735}{article}{
   author={Rossi, Elena},
   title={A justification of a LWR model based on a follow the leader
   description},
   journal={Discrete Contin. Dyn. Syst. Ser. S},
   volume={7},
   date={2014},
   number={3},
   pages={579--591},
   issn={1937-1632},
   review={\MR{3177735}},
   doi={10.3934/dcdss.2014.7.579},
}

\bib{MR1707279}{book}{
   author={Serre, Denis},
   title={Systems of conservation laws. 1},
   note={Hyperbolicity, entropies, shock waves;
   Translated from the 1996 French original by I. N. Sneddon},
   publisher={Cambridge University Press, Cambridge},
   date={1999},
   pages={xxii+263},
   isbn={0-521-58233-4},
   review={\MR{1707279}},
   doi={10.1017/CBO9780511612374},
}

\bib{ShenCode}{article}{
   author={Shen, Wen},
   title={\texttt{http://www.personal.psu.edu/wxs27/SIM/TrafficODE/}}, 
    journal={Scilab code used to generate the approximate solutions in this paper},
   date={2017},
}

\bib{ShenDDDE2017}{article}{
   author={Shen, Wen},
   title={Traveling Wave Profiles for a Follow-the-Leader Model for Traffic Flow  with Rough Road Condition},
   journal={Preprint},
   date={2017},
}

\end{biblist}
%\end{thebibliography}

\end{document}